\def\acts{\mathrel{\reflectbox{$\righttoleftarrow$}}}
\DeclareMathOperator{\Rep}{Rep}
\DeclareMathOperator{\NC}{NC}
\newcommand{\stbinom}{\genfrac{\{}{\}}{0pt}{}}
\newcommand{\qbinom}{\genfrac{[}{]}{0pt}{}}
\def\multiset#1#2{\ensuremath{\left(\kern-.3em\left(\genfrac{}{}{0pt}{}{#1}{#2}\right)\kern-.3em\right)}}
\theoremstyle{plain}
\newtheorem{thm}{Theorem}[section]
\newtheorem{cor}[thm]{Corollary}
\newtheorem{prop}[thm]{Proposition}
\newtheorem{lem}[thm]{Lemma}
\newtheorem{conj}[thm]{Conjecture}
\newtheorem{fact}[thm]{Fact}
\theoremstyle{definition}
\newtheorem{defn}[thm]{Definition}
\newtheorem{exmp}[thm]{Example}
\newtheorem{notn}[thm]{Notation}
\let\olddefn\defn
\renewcommand{\defn}{\olddefn\normalfont}
\newtheorem{rem}[thm]{Remark}
\let\c@equation\c@thm
\numberwithin{equation}{section}
\subjclass[2010]{Primary: 05E18. Secondary: 05E10}
\newcommand{\cmt}[1]{\noindent {\bf \color{red} #1 \color{black}}}
\title{Dihedral Sieving Phenomena}
\author{Sujit Rao}
\address{Department of Mathematics, Cornell University, Ithaca, NY 14850}
\email{sujitkrao@gmail.com}
\author{Joe Suk}
\address{Department of Mathematics, Stony Brook University, Stony Brook, NY 11794-
3651, USA}
\email{ybjosuk@gmail.com}
\begin{document}
\begin{abstract}
Cyclic sieving is a well-known phenomenon where certain interesting polynomials, especially $q$-analogues, have useful interpretations related to actions and representations of the cyclic group.
We propose a definition of sieving for an arbitrary group $G$ and study it for the dihedral group $I_2(n)$ of order $2n$. This requires understanding the generators of the representation ring of the dihedral group.
For $n$ odd, we exhibit several instances of dihedral sieving which involve the generalized Fibonomial coefficients, recently studied by Amdeberhan, Chen, Moll, and Sagan. We also exhibit an instance of dihedral sieving involving Garsia and Haiman's $(q,t)$-Catalan numbers.
\end{abstract}
\maketitle

\section{Introduction}
The cyclic sieving phenomenon was originally studied by Reiner, Stanton, and White in \cite{rsw} in 2004 and has, since then, led to a greater understanding of the combinatorics of various finite sets with a natural cyclic action.
In particular, cyclic sieving allows one to count the fixed points of a cyclic action on a finite set through an associated generating function. These generating functions often appear in other contexts, such as the generating function for permutation statistics related to Coxeter groups and as the Hilbert series of interesting graded rings. Proofs of cyclic sieving also tend to have interesting connections with representation theory.

We start by precisely defining cyclic sieving:

\begin{defn}[cyclic sieving phenomenon]\label{defn-cyclicsievng}
Let $X$ be a finite set, $X(q)$ be a polynomial with nonnegative integral coefficients, and $\mathbb{Z}/n\mathbb{Z}$ be a cyclic group of order $n$ with a group action on $X$. Let $\omega_n:\mathbb{Z}/n\mathbb{Z}\hookrightarrow \mathbb{C}^{\times}$ be the map defined by $m\mapsto e^{2\pi m i/n}$. Then, we say the triple $(X,X(q),\mathbb{Z}/n\mathbb{Z})$ exhibits the \emph{cyclic sieving phenomenon} if for all $c\in \mathbb{Z}/n\mathbb{Z}$,
\begin{equation}
|X(q)|_{q=\omega(c)}=|\{x\in X:c(x)=x\}|
\end{equation}
\end{defn}
As $X(1) = |X|$, $X(q)$ can be considered a $q$-analogue of the cardinality. Before discussing some classic examples of cyclic sieving, we recall the definition of the $q$-binomial coefficient $\qbinom{n}{k}_q$. First, let $[n]_q=1+q+q^2+\cdots+q^{n-1}$ and let $[n]!_q=[n]_q[n-1]_q\cdots[2]_q[1]_q$. Then, the $q$-binomial coefficient is defined as
\begin{equation}
\qbinom{n}{k}_q:=\frac{[n]!_q}{[k]!_q[n-k]!_q}
\end{equation}
This is a rational function in $q$. It is not immediately obvious, but the $q$-binomial coefficient can also be shown to be a polynomial in $q$ with nonnegative integral coefficients. MacMahon's $q$-Catalan number, defined similarly,
\begin{equation}
C_n(q) \coloneqq \frac{1}{[n+1]_q}\qbinom{2n}{n}_q
\end{equation}
is also a polynomial in $q$ with nonnegative integral coefficients. For one proof, see Theorem 1.6 of \cite{haglundq}.

Now, we discuss some examples of cyclic sieving given in the seminal paper \cite{rsw}. All of these are cases where the natural cyclic action of $C$ on the set $[n]:=\{1,\ldots,n\}$ induces an action on some collection of subsets of $[n]$. One of these collections is the collection of non-crossing partitions: a partition of $[n]$ is non-crossing if its blocks do not cross when drawn on a disk whose boundary is labeled clockwise with $1,2,\ldots,n$. Sometimes the action $C\acts [n]$ is interpreted geometrically via rotations of a regular $n$-gon.
\begin{exmp}\label{ex:sieving-examples}
Let $C$ be a cyclic group of order $n$. Then, the following triples $(X,X(q),C)$ exhibit cyclic sieving phenomenon.
\begin{enumerate}
\item Let $X=\{\text{size $k$ multisubsets of $[n]$}\}$ and let $X(q)=\qbinom{n+k-1}{k}_q$.
\item Let $X=\{\text{size $k$ subsets of $[n]$}\}$ and let $X(q)=\qbinom{n}{k}_q$.
\item Let $X=\{\text{noncrossing partitions of an $n$-gon}\}$ and let $X(q)=C_n(q)$.
\item Let $X=\{\text{noncrossing partitions of an $n$-gon using $n-k$ parts}\}$ and let $X(q)=\frac{1}{[n]_q}\qbinom{n}{k}_q\qbinom{n}{k+1}_q q^{k(k+1)}$.
\item Let $X=\{\text{triangulations of a regular $n$-gon}\}$ and let $X(q)=C_{n-2}(q)$.
\item Let $X=\{\text{dissections of a convex $n$-gon using $k$ diagonals}\}$ and let $X(q)=\frac{1}{[n+k]_q}\qbinom{n+k}{k+1}_q\qbinom{n-3}{k}_q$.
\end{enumerate}

\end{exmp}
Note that in case (6), $X(q)$ is a $q$-analogue of $\frac{1}{n+k}\binom{n+k}{k+1}\binom{n-3}{k}$, a formula for the number of dissections of a convex $n$-gon using $k$ diagonals and in case (4), $X(q)$ is a $q$-analogue of $N(n,k)=\frac{1}{n}\binom{n}{k}\binom{n}{k+1}$, the \emph{Narayana number} which counts the number of non-crossing partitions of $[n]$ using $n-k$ parts. Both of these are also polynomials in $q$ for virtually the same reasons their corresponding formulas $X(1)$ are integers. Also note that case (6) is a specific example of case (5) where $k=n-3$.

Every cyclic action can be equipped with a generating polynomial $X(q)$ to obtain an instance of cyclic sieving. If we require $X(q)$ be of degree at most $n-1$, then the choice of polynomial is unique. Thus, cyclic sieving can be made ubiquitous. However, the interest and fascination of this subject stem from the fact that some of these associated generating polynomials will arise in other contexts. In particular, those arising via $q$-binomials or $q$-Catalan numbers are of particular interest and their importance is discussed in \cite{sagan}.

Typically, we have an educated guess for an appropriate cyclic sieving polynomial for a particular cyclic action which has already been used in another context. Proving that a chosen polynomial produces an instance of cyclic sieving often happens in two ways: via direct computation of the generating polynomial or an understanding of the corresponding permutation representation. The following theorem is used in the latter way.

Defining sieving phenomena for other finite groups is a natural generalization for understanding other group actions. For the case of abelian groups, or direct products of cyclic groups, a natural definition of polycyclic sieving using a multivariate generating function was defined with accompanying instances, or examples, in \cite{brs}.

In this work, we give a new definition of sieving phenomena for finite groups motivated by the representation theoretic perspective of cyclic sieving provided above and apply this to the case of dihedral group actions. We prove that the natural dihedral action in situations (1)--(5) in Example \ref{ex:sieving-examples} have dihedral sieving for $n$ odd. The analogous generating polynomials we obtain for situations (1)--(4) are defined in terms of generalized Fibonacci polynomials, first studied by Hoggatt and Long in \cite{hl}, and their induced generalized Fibonomial coefficients, recently studied by Amdeberhan, Chen, Moll, and Sagan in \cite{acms}. We find the generating polynomial for dihedral sieving on the set of triangulations of a regular $n$-gon is related to Garsia and Haiman's $(q,t)$-Catalan polynomial.

\section{Preliminaries}
Throughout the paper, we will use the following notation.
\begin{notn}
Let $C=\langle c\rangle$ be a cyclic group of order $n$. Let $\omega : C \to \mathbb{C}^{\times}$ be an embedding of $C$ defined by $c\mapsto e^{2\pi i/n}$. This can also be considered a $1$-dimensional complex representation of $C$.

If $V$ is a representation of a group $G$, we will use $\chi_V$ to refer to its character. If $x\in G$, $\chi_V(x)$ is the value of the character at $x$. If $C\subset G$ is a conjugacy class, then $\chi_V(C)$ is the value of the character on $C$ as a class function. In the case where $G=GL_N(\mathbb{C})$, we use $\chi_V(\operatorname{diag}(x_1,\ldots,x_N))$ to denote the value of the character on any diagonalizable element of $GL_N(\mathbb{C})$ having eigenvalues $x_1,\ldots,x_N$.
\end{notn}

We will first give an equivalent definition of cyclic sieving based on representation theory, which is more suitable for adapting to other groups.

\begin{defn}
Let $G$ be a group and let $A$ be the set of isomorphism classes of finite-dimensional $G$-representations over $\mathbb{C}$. The \emph{representation ring} of $G$ with coefficients in $\mathbb{Z}$ is
\[\Rep(G) = \mathbb{Z}[A]/(I + J)\]
where $\mathbb{Z}[A]$ is the polynomial ring over $\mathbb{Z}$ freely generated by $A$, and $I$ and $J$ are the ideals
\begin{align*}
I &= (\{[U \oplus V] - ([U] + [V])\}) \\
J &= (\{[U \otimes V] - [U][V]\})
\end{align*}
and $[U]$ denotes the isomorphism class of a $G$-representation $U$.
\end{defn}

We will often use the following facts about the representation ring of a group.

\begin{fact}
The representation ring $\Rep(G)$ is a free abelian group with a basis given by isomorphism classes of irreducible representations.
\end{fact}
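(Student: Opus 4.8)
The plan is to construct an explicit isomorphism between $\Rep(G)$ and the free abelian group with basis $\widehat{G}$, the set of isomorphism classes of irreducible representations; throughout I take $G$ to be finite, so that Maschke's theorem is available. There are two things to establish: that the classes $[V_i]$ with $V_i \in \widehat{G}$ generate $\Rep(G)$ as an abelian group, and that they are $\mathbb{Z}$-linearly independent.

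For the generating statement, I would take an arbitrary element of $\Rep(G)$; it is the image of a $\mathbb{Z}$-linear combination of monomials $[W_1][W_2]\cdots[W_k]$ in the polynomial ring $\mathbb{Z}[A]$. The relations in $J$ give $[W_1]\cdots[W_k] \equiv [W_1 \otimes \cdots \otimes W_k]$ modulo $I+J$, so every monomial is congruent to the class of a single representation $W = W_1 \otimes \cdots \otimes W_k$. By Maschke's theorem $W \cong \bigoplus_i V_i^{\oplus m_i}$ for uniquely determined nonnegative integers $m_i$, and then the relations in $I$ give $[W] \equiv \sum_i m_i [V_i]$. Hence the $[V_i]$ span $\Rep(G)$ over $\mathbb{Z}$.

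For linear independence I would use the character homomorphism. The assignment $[W] \mapsto \chi_W$ extends uniquely to a ring homomorphism $\mathbb{Z}[A] \to \mathrm{Cl}(G)$, the ring of complex-valued class functions on $G$, because $\mathbb{Z}[A]$ is a polynomial ring on the set $A$. The identities $\chi_{U \oplus V} = \chi_U + \chi_V$ and $\chi_{U \otimes V} = \chi_U \cdot \chi_V$ say precisely that the generators of $I$ and of $J$ lie in the kernel, so this descends to a ring homomorphism $\chi \colon \Rep(G) \to \mathrm{Cl}(G)$. If $\sum_i a_i [V_i] = 0$ in $\Rep(G)$ with $a_i \in \mathbb{Z}$, applying $\chi$ yields $\sum_i a_i \chi_{V_i} = 0$ in $\mathrm{Cl}(G)$; since distinct irreducible characters are linearly independent (first orthogonality relations), all $a_i = 0$. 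Combined with the generating statement this shows $\Rep(G)$ is free abelian with basis $\{[V_i] : V_i \in \widehat{G}\}$, and in fact that $\chi$ embeds $\Rep(G)$ as a subring of $\mathrm{Cl}(G)$.

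Most of the ingredients — the behaviour of characters under $\oplus$ and $\otimes$, and orthogonality of irreducible characters — are entirely standard, so the only step that warrants care is the generating argument. The subtlety there is that complete reducibility alone does not finish the job: one must first invoke the relations in $J$ to collapse an arbitrary product of classes (a monomial in $\mathbb{Z}[A]$, not itself a representation) down to the class of a single representation before Maschke's theorem applies. It is also worth emphasizing that the finiteness of $G$ is genuinely used: for a general group a finite-dimensional representation need not be semisimple, and since the definition of $\Rep(G)$ imposes only direct-sum relations rather than relations coming from all short exact sequences, the class of an indecomposable but reducible representation would give an extra generator not accounted for by $\widehat{G}$.
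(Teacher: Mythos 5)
The paper states this Fact as standard background and gives no proof of its own, so there is nothing to compare step by step; your argument is the standard one (collapse monomials using the relations in $J$, decompose via Maschke's theorem and the relations in $I$, then get linear independence from the character homomorphism together with orthogonality of irreducible characters), and it does prove the intended statement for finite $G$. Your two closing caveats --- that $J$ is needed before Maschke applies, and that semisimplicity is genuinely used because only direct-sum relations are imposed --- are both correct and well placed.

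There is, however, one step that fails for the definition exactly as written in the paper, and your generation argument passes over it: the degree-zero monomial. The relations in $J$ collapse a product $[W_1]\cdots[W_k]$ only when $k \geq 1$, and the stated relations do not force $1 = [\mathbbm{1}]$ in the quotient. Concretely, the ring homomorphism $\mathbb{Z}[A] \to \mathbb{Z}$ sending every generator $[W]$ to $0$ annihilates all the listed generators of $I$ and $J$, hence descends to $\mathbb{Z}[A]/(I+J)$, and it separates $1$ from $[\mathbbm{1}]$; so with the paper's literal definition the quotient is free abelian on $\{1\} \cup \{[V_i]\}$, not on the irreducible classes alone (and for the same reason $1 - [\mathbbm{1}]$ would lie in the kernel of the character map of the next Fact). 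This is really a glitch in the paper's presentation rather than in your strategy: the intended object is the Grothendieck ring of representations, in which the class of the trivial representation is the multiplicative unit. So either note that the definition should be read as additionally imposing $[\mathbbm{1}] = 1$, or handle the constant monomial explicitly; with that amendment your proof is complete.
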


\begin{fact}
The map defined by
\[\begin{tikzcd}[row sep={0pt},/tikz/column 1/.append style={anchor=base east},/tikz/column 2/.append style={anchor=base west}]
\Rep(G) \arrow[r] & {\displaystyle \bigoplus _{\substack{\text{all conj.}\\\text{classes in $G$}}} \mathbb{C}} \\
{[V]} \arrow[r, mapsto] & \chi_{V}
\end{tikzcd}\]
which sends an isomorphism class of a representation to its character (in the ring of conjugacy class functions on $G$ with pointwise multiplication), is an injective ring homomorphism whose image is the $\mathbb{Z}$-span of characters of irreducible representations.
\end{fact}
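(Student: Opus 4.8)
The plan is to establish the three assertions separately --- well-definedness as a ring homomorphism, identification of the image, and injectivity --- using standard character theory for finite groups, which is the setting in which this Fact will be applied.

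First I would check well-definedness. The ring $\mathbb{Z}[A]$ is free, so there is a unique ring homomorphism $\mathbb{Z}[A] \to \{\text{class functions on } G\}$ sending each generator $[U]$ to $\chi_U$; I claim it annihilates $I + J$. This is immediate from the elementary facts that $\chi_{U \oplus V} = \chi_U + \chi_V$ and that $\chi_{U \otimes V} = \chi_U \cdot \chi_V$ (pointwise), the latter because $\operatorname{tr}(g \mid U \otimes V) = \operatorname{tr}(g \mid U)\operatorname{tr}(g \mid V)$ for every $g \in G$. Hence both families of generators of $I$ and of $J$ map to $0$, so the map descends to a well-defined map $\Rep(G) \to \bigoplus_{\text{conj.\ classes}} \mathbb{C}$, and it is a ring homomorphism because the product on $\Rep(G)$ is by construction induced by $\otimes$, which corresponds to pointwise multiplication of characters.

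Next I would pin down the image. By Maschke's theorem every finite-dimensional complex representation of $G$ is a finite direct sum of irreducibles $U_1, \dots, U_r$, so for any $V$ the character $\chi_V$ is a nonnegative-integer combination of $\chi_{U_1}, \dots, \chi_{U_r}$; thus the image is contained in the $\mathbb{Z}$-span $M$ of the irreducible characters. Conversely each $\chi_{U_i}$ is the image of $[U_i] \in \Rep(G)$, so the image is exactly $M$. For injectivity, recall from the first Fact that $\Rep(G)$ is the free abelian group on $[U_1], \dots, [U_r]$, and the map carries this basis to $\chi_{U_1}, \dots, \chi_{U_r}$; hence injectivity is equivalent to $\mathbb{Z}$-linear (equivalently $\mathbb{C}$-linear) independence of the irreducible characters. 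This is precisely the first orthogonality relation, $\langle \chi_{U_i}, \chi_{U_j} \rangle = \delta_{ij}$ for the standard Hermitian inner product on class functions, which exhibits the $\chi_{U_i}$ as an orthonormal, hence linearly independent, family.

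Honestly there is no serious obstacle here --- this is a textbook fact --- but the one point deserving care is that the argument uses finiteness of $G$ in an essential way, both for complete reducibility (to get the image right) and for orthonormality of irreducible characters (to get injectivity). So the statement should be read as applying to finite $G$, which is all that is needed in what follows, and one should flag this hypothesis explicitly rather than leaving it to the ambient ``Let $G$ be a group'' of the definition of $\Rep(G)$.
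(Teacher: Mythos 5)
Your proof is correct: the paper states this as a Fact without proof, treating it as standard character theory, and your argument (descent of the map through the ideals $I+J$ via $\chi_{U\oplus V}=\chi_U+\chi_V$ and $\chi_{U\otimes V}=\chi_U\chi_V$, Maschke's theorem for the image, and orthogonality of irreducible characters for injectivity) is exactly the standard one being invoked. Your caveat about finiteness of $G$ is well taken---for infinite groups non-isomorphic representations can share a character, so injectivity genuinely uses the finite (semisimple) setting, which is the only one used in the paper.
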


The following theorem gives one definition of cyclic sieving based on the representation-theoretic perspective.

\begin{thm}[{\cite[Prop. 2.1]{rsw}}]
Consider a triple $(X,X(q),C)$ as in the setup of Proposition \ref{defn-cyclicsievng}. Let $A_X$ be a graded $\mathbb{C}$-vector space $A_X=\oplus_{i\geq 0}A_{X,i}$ having $\sum_{i\geq 0} \operatorname{dim}_{\mathbb{C}} A_{X,i} q^i=X(q)$. $A_X$ can be considered a representation of $C$ in which each $c\in C$ acts on the graded component $A_{X,i}$ by the scalar $\omega(c)^i$. Then, $(X,X(q),C)$ has cyclic sieving if and only if we have an isomorphism of $C$-representations $A_X\cong \mathbb{C}[X]$.
\end{thm}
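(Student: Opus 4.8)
The plan is to deduce the statement from the elementary fact that a finite-dimensional complex representation of a finite group is determined up to isomorphism by its character, combined with two short character computations. Since $C$ is abelian, each of its elements is its own conjugacy class, so by the Fact recorded above (the character map $\Rep(C) \to \bigoplus_{\text{conj. classes}} \mathbb{C}$ is injective) two $C$-representations are isomorphic precisely when their characters agree at every element $c \in C$. Hence it suffices to prove that the cyclic sieving condition for $(X, X(q), C)$ is equivalent to the equality $\chi_{A_X}(c) = \chi_{\mathbb{C}[X]}(c)$ for all $c \in C$. Implicit in the setup is that $X(q)$ has nonnegative integer coefficients, which is exactly what is needed for a graded vector space $A_X$ with Hilbert series $X(q)$ to exist; note also that $X(q)$ is a genuine polynomial, so only finitely many graded pieces are nonzero and the prescription ``$c$ acts on $A_{X,i}$ by $\omega(c)^i$'' is a well-defined $C$-action since $\omega(c)^n = 1$.

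First I would compute the left-hand character. By construction $c$ acts on $A_{X,i}$ as the scalar $\omega(c)^i$, so its trace on $A_{X,i}$ is $\dim_{\mathbb{C}}(A_{X,i}) \, \omega(c)^i$; summing over $i$ and invoking $\sum_{i \geq 0} \dim_{\mathbb{C}}(A_{X,i}) q^i = X(q)$ gives $\chi_{A_X}(c) = X(q)\big|_{q = \omega(c)}$. Next I would recall the standard formula for a permutation character: in the basis $\{e_x\}_{x \in X}$ of $\mathbb{C}[X]$, the element $c$ permutes the basis vectors according to its action on $X$, so its trace is the number of fixed basis vectors, i.e. $\chi_{\mathbb{C}[X]}(c) = |\{x \in X : c(x) = x\}|$. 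In particular, taking $c$ to be the identity recovers $\dim_{\mathbb{C}} A_X = X(1)$ and $\dim_{\mathbb{C}} \mathbb{C}[X] = |X|$, so the dimensions automatically match whenever sieving holds.

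Putting the two computations together, $A_X \cong \mathbb{C}[X]$ as $C$-representations if and only if $X(q)\big|_{q = \omega(c)} = |\{x \in X : c(x) = x\}|$ for every $c \in C$, which is verbatim the defining condition of the cyclic sieving phenomenon in Definition~\ref{defn-cyclicsievng}. This proves the equivalence. I do not expect a genuine obstacle here; the only points meriting a line of care are the verification that $A_X$ is a bona fide $C$-representation and the explicit appeal to injectivity of the character map to upgrade ``equal characters'' to ``isomorphic representations''.
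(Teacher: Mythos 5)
Your proof is correct: computing $\chi_{A_X}(c)=X(\omega(c))$, identifying $\chi_{\mathbb{C}[X]}(c)$ with the fixed-point count, and invoking that complex characters determine finite-group representations up to isomorphism is exactly the standard argument for this equivalence. The paper itself gives no proof but cites \cite[Prop.~2.1]{rsw}, whose proof proceeds along the same lines as yours, so there is nothing to add.
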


We can now state an equivalent definition of cyclic sieving based on the representation ring.

\begin{thm}[{\cite[Prop. 2.1]{rsw}}]
Let $(X, X(q), C)$ be a triple where $C$ acts on $X$ and $X(q) \in \mathbb{N}[q]$. This triple has cyclic sieving if and only if $\mathbb{C}[{X}] = X(\omega)$ in $\Rep(C; \mathbb{Z})$.
\end{thm}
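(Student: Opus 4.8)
The plan is to deduce this reformulation directly from the preceding theorem, which already identifies the cyclic sieving property with the existence of an isomorphism of $C$-representations $A_X \cong \mathbb{C}[X]$. The first step is to unwind the meaning of the expression $X(\omega)$ in $\Rep(C;\mathbb{Z})$. Write $X(q) = \sum_{i \ge 0} a_i q^i$ with $a_i \in \mathbb{N}$, and regard $\omega$ as the class $[\omega] \in \Rep(C;\mathbb{Z})$ of the one-dimensional representation $c \mapsto e^{2\pi i/n}$. Using the ring structure of $\Rep(C;\mathbb{Z})$, the natural reading is $X(\omega) := \sum_{i \ge 0} a_i\,[\omega]^i = \sum_{i \ge 0} a_i\,[\omega^{\otimes i}]$. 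Since the $a_i$ are nonnegative integers, this is the class of a genuine (not merely virtual) finite-dimensional $C$-representation.

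Next I would identify $X(\omega)$ with the class of the graded space $A_X$ from the preceding theorem. By construction $A_X = \bigoplus_{i \ge 0} A_{X,i}$ with $\dim_{\mathbb{C}} A_{X,i} = a_i$, and $c \in C$ acts on $A_{X,i}$ by the scalar $\omega(c)^i$; that is, as a $C$-representation $A_{X,i}$ is a direct sum of $a_i$ copies of $\omega^{\otimes i}$. Summing over $i$ gives $[A_X] = \sum_{i \ge 0} a_i\,[\omega^{\otimes i}] = X(\omega)$ in $\Rep(C;\mathbb{Z})$.

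The remaining step is purely formal. Since $\Rep(C;\mathbb{Z})$ is free abelian on the classes of irreducibles (equivalently, the character map is injective), two finite-dimensional $C$-representations are isomorphic if and only if their classes in $\Rep(C;\mathbb{Z})$ coincide. Hence $A_X \cong \mathbb{C}[X]$ as $C$-representations if and only if $[A_X] = [\mathbb{C}[X]]$ in $\Rep(C;\mathbb{Z})$, i.e.\ if and only if $X(\omega) = \mathbb{C}[X]$ there. Combined with the preceding theorem this proves the equivalence.

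There is no genuine obstacle here — the argument is an unwinding of definitions — but the point deserving a little care is the bookkeeping in the second paragraph: one must check that the grading convention on $A_X$ (the $i$-th graded piece carrying the $i$-th tensor power of $\omega$, equivalently the scalar $\omega(c)^i$) is exactly what makes the substitution $q \mapsto \omega$ in the generating function correspond to passing to $\Rep(C;\mathbb{Z})$. As a sanity check one can evaluate characters at $c^j$: the character of $X(\omega)$ is $\sum_i a_i \omega(c^j)^i = X(q)|_{q = \omega(c^j)}$, while the character of $\mathbb{C}[X]$ counts the fixed points of $c^j$ on $X$, so the identity $X(\omega) = \mathbb{C}[X]$ in $\Rep(C;\mathbb{Z})$ is literally Definition \ref{defn-cyclicsievng}; this also yields a self-contained proof that bypasses the preceding theorem entirely.
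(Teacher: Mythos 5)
Your argument is correct. Note that the paper offers no proof of this statement at all -- it is quoted as \cite[Prop.~2.1]{rsw} -- so there is nothing to diverge from; your unwinding (identifying $X(\omega)=\sum_i a_i[\omega^{\otimes i}]$ with the class of the graded space $A_X$, then using freeness of $\Rep(C;\mathbb{Z})$ on irreducibles, or equivalently injectivity of the character map) is exactly the standard justification, and your closing character evaluation at each $c\in C$ gives a clean self-contained proof that the ring-theoretic identity is literally Definition \ref{defn-cyclicsievng}.
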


This motivates the following general definition, which is key to all results in this work.

\begin{defn}
Let $G$ be a group and $\rho_{1}, \dots, \rho_{k}$ be representations of $G$ over $\mathbb{C}$ which generate $\Rep(G)$ as a ring. Let $X$ be a $G$-set and $X(q_{1}, \dots, q_{k}) \in \mathbb{Z}[q_{1}, \dots, q_{k}]$. Then the quadruple $(X, X(q_{1}, \dots, q_{k}), (\rho_{1}, \dots, \rho_{k}), G)$ has \textbf{$G$-sieving} if $\mathbb{C}[{X}] = X(\rho_{1}, \dots, \rho_{k})$ in $\Rep(G)$.
\end{defn}

\begin{exmp}
Let $G = C$ and $\rho_{1} = \omega$. Then the definition $G$-sieving above agrees with the usual definition of cyclic sieving.
\end{exmp}

\begin{exmp}
Let $G = C_{n} \times C_{m}$ be a product of cyclic groups. Let $\rho_{1} = \omega_{n} \otimes 1_{m}$, where $\omega_{n} : C_{n} \to \mathbb{C}^{\times}$ is an embedding and $1_{m}$ is the trivial representation of $C_{m}$, and similarly let $\rho_{2} = 1_{n} \otimes \omega_{m}$. Then the definition of $G$-sieving above agrees with the definition of bicyclic sieving given in \cite{brs}.
\end{exmp}

\begin{rem}
Given any $G$-set $X$ and generators $\rho_{1}, \dots, \rho_{k}$ of $\Rep(G)$, there is always at least one polynomial $X(q_{1}, \dots, q_{k})$ which exhibits $G$-sieving for $X$. This follows directly from the fact that $\rho_{1}, \dots, \rho_{k}$ generate $\Rep(G)$. However, there is no guarantee that there is a canonical or interesting choice of such a polynomial, especially if there are complicated relations between the generators.
\end{rem}

\begin{rem}
Suppose we are given a set of points $\{a_{C}\} \in \mathbb{C}^{k}$ indexed by conjugacy classes in a group $G$ where $k \in \mathbb{N}$. Let $X$ be a finite $G$-set, and $p \in \mathbb{C}[q_{1}, \dots, q_{k}]$ such that $p(a_{C}) = \chi_{\mathbb{C}[X]}(C)$ for all conjugacy classes $C$. Then $(X, p(q_{1}, \dots, q_{k}), (\rho_{1}, \dots, \rho_{k}), G)$ exhibits $G$-sieving if $\rho_{i} \in \Rep(G)$ corresponds to the class function on $G$ defined by $C \mapsto (a_{C})_{i}$. If instead of $\Rep(G)$ we take $\mathbb{C} \otimes \Rep(G)$, then the virtual representations $\rho_{i}$ always exist.
\end{rem}

We are typically interested in cases where the polynomial $X(\cdot)$ can be written in an interesting way, such as product formulas based on $q$-analogues.

\section{Dihedral Sieving}\label{sec:dihedralsieving}
By the observations in the previous section, we can describe $I_2(n)$-sieving in terms of a generating set of $\Rep(I_2(n))$. We first need a description of the representation ring of the dihedral group. We start by briefly recalling the irreducible representations of $I_2(n)$. We adhere to the presentation
\begin{equation}\label{eq:dihedral}
I_2(n)=\langle r,s|r^n=s^2=e,rs=sr^{-1}\rangle
\end{equation}
The irreducible representations and the representation ring will depend on whether $n$ is odd or even. For $n$ odd, there are two $1$-dimensional irreducible representations, the trivial representation $\mathbbm{1}$ and the determinant representation, and $\lfloor n/2\rfloor$ $2$-dimensional irreducible representations: the representations $z_m$ which sends $r$ to a counterclockwise rotation matrix of $2\pi m/n$ radians and $s$ to a reflection matrix where $m\in[1,n/2)\cap\mathbb{N}$. For $n$ even, there are four $1$-dimensional irreducible representations: $\mathbbm{1}$, $\det$, $\chi_b$ which sends $\langle r^2,s\rangle$ to $1$ and $r$ to $-1$, and $\det\cdot \chi_b$. There are $(n/2-1)$ $2$-dimensional irreducible representations, defined the same way as the $n$ odd case. Character values for all of our group actions of interest are included in Table \ref{table-character}.

We refer to \cite{gaetz} for the following. First, we have (for both $n$ odd and even) the following relations among the irreducible representations:
\begin{align*}
{\det}^2 &=1\\
\det\cdot z_k&=z_k\\
z_{k+1}&=z_kz_1-z_{k-1}\text{ if $k\leq \frac{n-3}{2}$ and where $z_0=1+\det$}
\end{align*}
Thus, $\Rep(I_2(n))$ is generated by $\det,z_1$ for $n$ odd and by $\det,z_1,\chi_b$ for $n$ even.

\begin{rem}
Using the equation $z_{k + 1} = z_{k}z_{1} - z_{k-1}$ to define $z_{k}$ for all $k \in \mathbb{Z}$, it can be checked that $z_{0} = z_{n} = 1 + \det$. Thus $z_{1}$ by itself generates $\Rep(I_{2}(n))$ when $n$ is odd, and $z_{1}, \chi_{b}$ generate it when $n$ is even. However, the expression of $\det$ in terms of $z_{1}$ depends on $n$, so it is more useful to think of $I_{2}(n)$ as being generated by $z_{1}$ and $\det$.
\end{rem}

The examples of dihedral sieving we exhibit are all for odd $n$ and make use of the generalized Fibonacci polynomials and Fibonomial coefficients defined in \cite{acms}. We state some of their results here.

\begin{defn}[{\cite[Equation (2)]{acms}}]
The \emph{generalized Fibonacci polynomials} are a sequence $\{n\}_{s,t}$ of polynomials in $\mathbb{N}[s, t]$ defined inductively by
\begin{align*}
\{0\}_{s,t} &= 0 \\
\{1\}_{s,t} &= 1 \\
\{n + 2\}_{s,t} &= s\{n + 1\}_{s,t} + t\{n\}_{s,t}.
\end{align*}
We also define
\[\{n\}!_{s,t} = \{n\}_{s,t}\{n - 1\}_{s,t} \cdots \{1\}_{s,t}\]
and the \emph{Fibonomial coefficient} to be
\[\stbinom{n}{k}_{s,t} = \frac{\{n\}!_{s,t}}{\{k\}!_{s,t}\{n - k\}!_{s,t}}.\]
\end{defn}

\begin{prop}[{\cite[Theorem 5.2]{acms}}]
The Fibonomial coefficient $\stbinom{n}{k}_{s,t}$ is a polynomial in $s$ and $t$ with nonnegative integer coefficients.
\end{prop}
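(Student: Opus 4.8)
The plan is to reduce the problem to a Pascal-type recurrence and then induct on $n$. First I would prove the \emph{addition formula}
\[\{a + b + 1\}_{s,t} = \{a + 1\}_{s,t}\,\{b + 1\}_{s,t} + t\,\{a\}_{s,t}\,\{b\}_{s,t} \qquad (a, b \geq 0)\]
by induction on $b$. The cases $b = 0$ and $b = 1$ are immediate from $\{0\}_{s,t} = 0$, $\{1\}_{s,t} = 1$, $\{2\}_{s,t} = s$ (the case $b = 1$ being exactly the defining recurrence), and the inductive step follows by writing $\{a + b + 2\}_{s,t} = s\{a + b + 1\}_{s,t} + t\{a + b\}_{s,t}$, applying the inductive hypothesis to both terms, and regrouping using $\{b + 2\}_{s,t} = s\{b+1\}_{s,t} + t\{b\}_{s,t}$.

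Next I would deduce the Pascal-type recurrence
\[\stbinom{n}{k}_{s,t} = \{k + 1\}_{s,t}\,\stbinom{n-1}{k}_{s,t} + t\,\{n - k - 1\}_{s,t}\,\stbinom{n-1}{k-1}_{s,t} \qquad (1 \leq k \leq n - 1).\]
Since each $\{j\}_{s,t}$ with $j \geq 1$ is nonzero (its leading term in $s$ is $s^{j-1}$), all the relevant factorials are invertible in the field $\mathbb{Q}(s,t)$, so it suffices to verify this identity there. Dividing both sides of the proposed recurrence by $\frac{\{n-1\}!_{s,t}}{\{k\}!_{s,t}\{n-k\}!_{s,t}}$ and cancelling factorials, one sees it is equivalent to
\[\{n\}_{s,t} = \{k + 1\}_{s,t}\,\{n - k\}_{s,t} + t\,\{k\}_{s,t}\,\{n - k - 1\}_{s,t},\]
which is the addition formula with $a = k$ and $b = n - k - 1$.

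Finally I would induct on $n$. The boundary values $\stbinom{n}{0}_{s,t} = \stbinom{n}{n}_{s,t} = 1$ lie in $\mathbb{N}[s,t]$, and for $1 \leq k \leq n - 1$ the recurrence expresses $\stbinom{n}{k}_{s,t}$ as a sum of products of $\{k+1\}_{s,t}$ and $\{n-k-1\}_{s,t}$ — which lie in $\mathbb{N}[s,t]$ straight from their recursive definition — with $\stbinom{n-1}{k}_{s,t}$ and $\stbinom{n-1}{k-1}_{s,t}$, which lie in $\mathbb{N}[s,t]$ by the inductive hypothesis. As $\mathbb{N}[s,t]$ is closed under addition and multiplication, $\stbinom{n}{k}_{s,t} \in \mathbb{N}[s,t]$, completing the induction.

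I expect the only real obstacle to be the first two steps together: guessing the correct bilinear addition formula for $\{n\}_{s,t}$ that makes the Pascal recurrence come out with manifestly nonnegative coefficients; once that is in hand everything else is routine bookkeeping. A natural alternative is to give a combinatorial model — realizing $\stbinom{n}{k}_{s,t}$ as a generating function for tilings of a staircase-shaped board by monominoes (weight $s$) and dominoes (weight $t$) — and prove the same recurrence by a tiling bijection, which would establish integrality and nonnegativity in one stroke.
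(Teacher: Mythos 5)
Your proof is correct: the addition formula $\{a+b+1\}_{s,t}=\{a+1\}_{s,t}\{b+1\}_{s,t}+t\,\{a\}_{s,t}\{b\}_{s,t}$ is the standard Lucas-sequence identity and your induction on $b$ verifies it; dividing the proposed Pascal-type recurrence by $\{n-1\}!_{s,t}/(\{k\}!_{s,t}\{n-k\}!_{s,t})$ in $\mathbb{Q}(s,t)$ does reduce it to that identity with $a=k$, $b=n-k-1$ (and working in the fraction field is legitimate, since the Fibonomial is a priori only a rational function); and the final induction on $n$ with boundary values $\stbinom{n}{0}_{s,t}=\stbinom{n}{n}_{s,t}=1$ closes the argument, since $\{k+1\}_{s,t}$ and $\{n-k-1\}_{s,t}$ lie in $\mathbb{N}[s,t]$ by construction. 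Note, however, that this paper does not prove the statement at all: it is imported by citation from Amdeberhan--Chen--Moll--Sagan (their Theorem 5.2), so there is no in-paper proof to compare against. Your argument is essentially the classical route taken in that literature (a bilinear addition formula yielding a Pascal-type recurrence with manifestly nonnegative coefficients), so it supplies a self-contained proof of the cited black box rather than a new method; the alternative you sketch at the end, interpreting $\stbinom{n}{k}_{s,t}$ as a weighted generating function for monomino--domino tilings, is precisely the Sagan--Savage combinatorial interpretation that this paper also cites, and it has the added benefit of giving a combinatorial meaning to each coefficient rather than just nonnegativity.
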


\begin{prop}[{\cite[Equation (7)]{acms}}]\label{prop:fibonomial-thm}
Let $X = \frac{s + \sqrt{s^{2} + 4t}}{2}$ and $Y = \frac{s - \sqrt{s^{2} + 4t}}{2}$. Then
\[\{n\}_{s,t} = Y^{n-1}\left. [n]_{q} \right|_{q = X/Y}\] and
\[\stbinom{n}{k}_{s,t} = Y^{k(n - k)} \left. \qbinom{n}{k}_{q} \right|_{q = X/Y}.\]
\end{prop}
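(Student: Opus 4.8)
The plan is to first prove a Binet-type closed form for the generalized Fibonacci polynomials and then obtain the Fibonomial statement by taking products. Note that $X$ and $Y$ are the two roots of $\lambda^{2} = s\lambda + t$, so $X + Y = s$ and $XY = -t$. I would consider the sequence $f_{n}(s,t) := \sum_{i=0}^{n-1} X^{i}Y^{n-1-i}$, that is, the complete homogeneous symmetric polynomial $h_{n-1}$ in the two variables $X, Y$. Because it is symmetric in $X$ and $Y$, it is a genuine element of $\mathbb{Z}[s,t]$; in particular it is defined even when $s^{2}+4t$ is not a square and even when $X = Y$, so there is no issue with the radical. From the generating-function identity $\sum_{m\ge 0} h_{m} z^{m} = (1 - sz - tz^{2})^{-1}$ one gets $h_{m} = s\,h_{m-1} + t\,h_{m-2}$ for $m \ge 2$, hence $f_{n+2} = s f_{n+1} + t f_{n}$; since also $f_{0} = 0$ and $f_{1} = 1$, induction gives $f_{n} = \{n\}_{s,t}$. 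Rewriting $f_{n} = Y^{n-1}\sum_{i=0}^{n-1}(X/Y)^{i} = Y^{n-1}\,[n]_{q}\big|_{q = X/Y}$ (valid whenever $Y \neq 0$, and then extended to an identity of polynomials) yields the first formula.

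For the Fibonomial coefficient I would simply take products of the first identity: $\{n\}!_{s,t} = \prod_{j=1}^{n} \{j\}_{s,t} = \prod_{j=1}^{n} Y^{j-1}[j]_{q}\big|_{q=X/Y} = Y^{\binom{n}{2}}\,[n]!_{q}\big|_{q=X/Y}$, using $\sum_{j=1}^{n}(j-1) = \binom{n}{2}$. Substituting this into the definition $\stbinom{n}{k}_{s,t} = \{n\}!_{s,t}/(\{k\}!_{s,t}\{n-k\}!_{s,t})$ and applying the elementary identity $\binom{n}{2} - \binom{k}{2} - \binom{n-k}{2} = k(n-k)$ collapses the power of $Y$ to exactly $Y^{k(n-k)}$ and leaves $\qbinom{n}{k}_{q}\big|_{q=X/Y}$, as claimed.

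The only delicate point — and the one I would be most careful about in the write-up — is the meaning of the substitution $q = X/Y$ when $t = 0$ (so $Y = 0$): the expressions $Y^{n-1}[n]_{q}|_{q=X/Y}$ and $Y^{k(n-k)}\qbinom{n}{k}_{q}|_{q=X/Y}$ must be read as the polynomials in $X, Y$ (equivalently in $s, t$) obtained by clearing denominators before specializing, namely $\sum_{i=0}^{n-1} X^{i}Y^{n-1-i}$ and the corresponding symmetrization of the $q$-binomial. Once both sides are phrased as identities of polynomials in $s$ and $t$, the argument is purely formal: it rests only on the defining recurrence and the two-variable symmetric-function identities above, with no analytic input. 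Alternatively, one may verify the polynomial identities on the Zariski-dense locus $t(s^{2}+4t) \neq 0$ and then invoke that $\{n\}_{s,t}$ and $\stbinom{n}{k}_{s,t}$ are themselves polynomials. I do not anticipate any serious obstacle beyond this bookkeeping.
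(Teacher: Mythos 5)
Your proof is correct; note that the paper does not prove this proposition at all but quotes it as Equation (7) of \cite{acms}, and your argument --- identifying $\{n\}_{s,t}$ with the complete homogeneous symmetric polynomial $h_{n-1}(X,Y)$ via the recurrence (i.e.\ the Binet-type form), then telescoping the powers of $Y$ using $\binom{n}{2}-\binom{k}{2}-\binom{n-k}{2}=k(n-k)$ --- is essentially the standard derivation given in that reference. Your care about the degenerate specialization $Y=0$ (reading both sides as polynomial identities in $s,t$, or arguing on a Zariski-dense locus) is sound and closes the only potential gap.
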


The generalized Fibonacci polynomials have combinatorial interpretations related to tilings of rows of squares with monominoes and dominoes, which can be found in Section 1 of \cite{acms}. The Fibonomial coefficients have a similar interpretation related to tilings of a $k\times (n-k)$ rectangle containing a partition, which can be found in \cite{Sagan09combinatorialinterpretations}.

In this work, most of our dihedral sieving polynomials will be given in terms of generalized Fibonacci polynomials and Fibonomial coefficients. In all further sections, we will use the generators $z_{1}, -\det$ of $\Rep(I_{2}(n)$ and say that a triple $(X,P(s,t),I_2(n))$ has dihedral sieving if the quadruple $(X,P(s,t),(z_1,-\det),I_2(n))$ does.

\section{\texorpdfstring{Dihedral action on $k$-subsets and $k$-multisubsets of $\{1,\ldots,n\}$}{Dihedral action on k-subsets and k-multisubsets of \{1, ... n\}}}
We first recall some facts about cyclic sieving. For a rational representation $\rho:{GL}_n(\mathbb{C})\to{GL}_n(V)$, let $\chi_{\rho}(x_1,\ldots,x_N)$ be the trace on $V$ of any diagonalizable element of ${GL}_N(\mathbb{C})$ having eigenvalues $x_1,\ldots,x_N$.

\begin{thm}[{\cite[Lemma 2.4]{rsw}}]\label{thm:rsw-lemma-24}
Let $\rho:{GL}_n(\mathbb{C})\to{GL}_n(V)$ be a representation. Assume $V$ has a basis $\{v_x\}_{x\in X}$ which is permuted by $\mathbb{Z}/n\mathbb{Z}$ in the following way:
$$c(v_x)=v_{c(x)}\text{ for all $c\in \mathbb{Z}/n\mathbb{Z}, x\in X$}$$
Then, let $X(q)$ be the principal specialization 
$$X(q)=\chi_{\rho}(1,q,\ldots,q^{N-1})$$
Then, $(X,X(q),C)$ exhibits the cyclic sieving phenomenon.
\end{thm}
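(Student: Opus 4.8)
The plan is to reinterpret the cyclic sieving condition as a trace identity: I would realize the $\mathbb{Z}/n\mathbb{Z}$-action on $X$ as the action, through $\rho$, of a single diagonalizable matrix in $GL_N(\mathbb{C})$, and then identify fixed-point counts with traces of its powers. Fix a generator $c$ of $\mathbb{Z}/n\mathbb{Z}$. By hypothesis $c$ acts on $V$ by the permutation matrix $P_\sigma$ (in the basis $\{v_x\}$) of the permutation $\sigma\colon x\mapsto c(x)$ of $X$; the point is that $P_\sigma=\rho(A)$ for the diagonal element $A=\operatorname{diag}(1,\zeta,\zeta^2,\dots,\zeta^{N-1})$ with $\zeta=e^{2\pi i/n}$ — the element of $GL_N(\mathbb{C})$ whose character value is exactly $X(q)$ specialized at $q=\zeta$. (This $A$ has order dividing $n$, so $\rho$ restricts on the cyclic group generated by $A$ to a representation compatible with the given action of $\mathbb{Z}/n\mathbb{Z}$.)

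I would then argue in three short steps. (i) For any permutation $\tau$ of $X$, $|X^\tau|=\operatorname{tr}_V(P_\tau)$, since a permutation matrix has trace equal to its number of fixed points; applying this to $\tau=c^d$ (acting as $\sigma^d$) gives $|X^{c^d}|=\operatorname{tr}_V(P_\sigma^{\,d})=\operatorname{tr}_V(\rho(A^d))$ for every $d$. (ii) Since $A^d=\operatorname{diag}(1,\zeta^d,\zeta^{2d},\dots,\zeta^{(N-1)d})$ is a diagonalizable element of $GL_N(\mathbb{C})$ with eigenvalues $1,\zeta^d,\dots,\zeta^{(N-1)d}$, the definition of $\chi_\rho$ on diagonalizable elements yields $\operatorname{tr}_V(\rho(A^d))=\chi_\rho(1,\zeta^d,\dots,\zeta^{(N-1)d})=X(q)\big|_{q=\zeta^d}$. (iii) Combining, $X(q)\big|_{q=\zeta^d}=|X^{c^d}|$ for all $d$; since $\omega(c^{d})=\zeta^{d}$, this is precisely the cyclic sieving condition of Definition~\ref{defn-cyclicsievng}. (As a sanity check, $d=0$ recovers $X(1)=\dim_\mathbb{C}V=|X|$; and when $\rho$ is a polynomial representation, $\chi_\rho$ is a nonnegative integer combination of Schur polynomials, whose principal specializations lie in $\mathbb{N}[q]$, so $X(q)\in\mathbb{N}[q]$ as required.)

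There is no substantive analytic or combinatorial obstacle here — the lemma is essentially a bookkeeping identity — and the only real content is the identification, in the first paragraph, of the permutation $\sigma$ with $\rho$ of the \emph{particular} diagonal matrix $A$ having spectrum $1,\zeta,\dots,\zeta^{N-1}$; it is exactly this choice of $A$ that makes $\chi_\rho(1,q,\dots,q^{N-1})$ the relevant polynomial, and in the intended applications ($\rho=\bigwedge^{k}$ or $\operatorname{Sym}^{k}$ on $\mathbb{C}^{n}$, with $c$ the cyclic shift permutation matrix, whose $n\times n$ eigenvalues are precisely the $n$-th roots of unity) this holds by construction. A secondary point, not needed to prove the lemma but needed to apply it, is that the cyclic shift permutes the standard basis of $\bigwedge^{k}\mathbb{C}^{n}$ only up to a sign, so there one must either twist the shift matrix by a scalar to clear the sign or check directly that it does not affect the traces computed above.
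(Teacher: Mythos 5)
The paper offers no proof of this statement---it is quoted directly from Reiner--Stanton--White (their Lemma~2.4)---so the comparison is with the original argument, and yours is essentially that argument: the number of fixed points of $c^{d}$ on $X$ equals the trace of the corresponding permutation matrix on the basis $\{v_{x}\}$, and that trace equals the principal specialization $X(q)$ at $q=\zeta^{d}$ because the group element acting is diagonalizable with eigenvalues $1,\zeta^{d},\dots,\zeta^{(N-1)d}$. The one claim to repair is the assertion that $P_{\sigma}=\rho(A)$ for $A=\operatorname{diag}(1,\zeta,\dots,\zeta^{N-1})$: as a literal matrix identity this is false (in the basis $\{v_{x}\}$ the left-hand side is a permutation matrix, whereas $\rho(A)$ is diagonal in a weight basis); what you actually need, and what holds in the intended setting, is that $c$ acts on $V$ through $\rho(g)$ for an element $g\in GL_{N}(\mathbb{C})$ \emph{conjugate} to $A$ (in the applications $g$ is the $n$-cycle permutation matrix with $N=n$, whose spectrum is exactly the $n$-th roots of unity), and since $\chi_{\rho}$ is a class function the computation $\operatorname{tr}_{V}\rho(g^{d})=\chi_{\rho}(1,\zeta^{d},\dots,\zeta^{(N-1)d})=X(\zeta^{d})$ goes through unchanged. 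Your closing caveats are also on point: the hypothesis implicitly requires the embedded generator to have spectrum $1,\zeta,\dots,\zeta^{N-1}$, and for $\Lambda^{k}\mathbb{C}^{n}$ the standard basis is only permuted up to sign, which is exactly the issue this paper later handles (in the dihedral setting) via Proposition~\ref{prop-uptoscalars} and Corollary~\ref{cor-uptoscalars}.
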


The above lemma can be used to prove cyclic sieving for $\mathbb{Z}/n\mathbb{Z}\acts X$ where $X=\binom{[n]}{k}$. Specifically, we take $V=V^{\lambda}$, the irreducible representation of $GL_n(\mathbb{C})$ with highest weight $\lambda=(k)\vdash k$ (the partition of $k$ with one part), and the specialization of the character value becomes the $q$-analogue of the Weyl character formula or the hook-content formula
\begin{equation}\label{eq-qhook}
\chi_{\rho}(1,q,\ldots,q^{n-1})=s_{\lambda}(1,q,\ldots,q^{n-1})=q^{b(\lambda)}\prod_{\text{cells $x\in\lambda$}}\frac{[n+c(x)]_q}{[h(x)]_q}=\qbinom{n+k-1}{k}_q
\end{equation}
where $h(x)$ is the hook-length of $\lambda$ at $x$ (total number of cells weakly to the right of $x$ or strictly below $x$), $c(x)$ is the hook content $j - i$ when cell $x$ is in row $i$ and column $j$, and
\[
b(\lambda) = \sum _{i} (i - 1)\lambda_{i}.
\]
We prove a generalization of this theorem which will help prove dihedral sieving for $k$-subsets and $k$-multisubsets. Given a highest-weight $\operatorname{GL}_n(\mathbb{C})$-representation $V$, consider its character specialization
\[ \chi_{V}(\operatorname{diag}(a^{n - 1}, a^{n - 2}b, \dots, ab^{n - 2}, b^{n - 1}))\]
as in Theorem \ref{thm:rsw-lemma-24} for some variables $a,b$. This is a symmetric polynomial in $a^{n-1},a^{n-2}b,\dots,ab^{n-2},b^{n-1}$ and, thus also in $a,b$.
This means the character specialization can be expressed as a polynomial in $a+b,ab$. If an action $I_{2}(n)\acts [n]$ is faithful, then $I_2(n)$ may be considered as a subgroup of permutation matrices of $\operatorname{GL}_{n}(\mathbb{C})$.

\begin{prop}\label{prop-generalization}
Let $n$ be odd, let $X$ be a finite set with $|X|=n$, and let $V$ be a $\operatorname{GL}_{n}(\mathbb{C})$-representation. Considering $I_2(n)$ as a subgroup of $\operatorname{GL}_n(\mathbb{C})$ through a faithful action $I_2(n)\acts X$, assume that $V$ has a basis indexed by $X$ which is permuted by $I_{2}(n)$ via $g(v_x)=v_{g(x)}$ for all $g\in I_2(n)$ and $x\in X$. Let $p$ be the unique polynomial in two variables such that
$$p(a + b, -ab) = \chi_{V}(\operatorname{diag}(a^{n - 1}, a^{n - 2}b, \dots, ab^{n - 2}, b^{n - 1}))$$
noting that the right-hand side is a symmetric function in $a$ and $b$. Then $(X, p, I_{2}(n))$ exhibits dihedral sieving.
\end{prop}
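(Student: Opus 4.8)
The plan is to reduce the statement to a computation in $\Rep(I_2(n))$ by evaluating both sides of the claimed identity $\mathbb{C}[X] = p(z_1, -\det)$ on each conjugacy class of $I_2(n)$, using the injectivity of the character map from the second Fact. Since $n$ is odd, $I_2(n)$ has conjugacy classes represented by $e$, the rotations $r^j$ (with $r^j \sim r^{-j}$, so $j = 1, \dots, (n-1)/2$), and a single class of reflections, so it suffices to check the identity on these.

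First I would record the character values of the two generators on these classes. The representation $z_1$ sends $r$ to a rotation by $2\pi/n$, so $\chi_{z_1}(r^j) = 2\cos(2\pi j/n) = \zeta^j + \zeta^{-j}$ where $\zeta = e^{2\pi i/n}$, and $\chi_{z_1}(s) = 0$ since a reflection matrix in $O(2)$ is traceless; also $\chi_{z_1}(e) = 2$. Meanwhile $-\det$ has $\chi_{-\det}(r^j) = -1$, $\chi_{-\det}(s) = 1$, $\chi_{-\det}(e) = -1$. The key observation is that on the rotation class $r^j$ the pair $(\chi_{z_1}(r^j), \chi_{-\det}(r^j)) = (\zeta^j + \zeta^{-j}, -\zeta^j \zeta^{-j})$ has exactly the shape $(a+b, -ab)$ with $a = \zeta^j$, $b = \zeta^{-j}$, which is precisely the substitution appearing in the definition of $p$.

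Next I would evaluate $\chi_{\mathbb{C}[X]}$ on each class. By the basis hypothesis, $\mathbb{C}[X]$ is the permutation representation of $I_2(n)$ acting on $X$, so $\chi_{\mathbb{C}[X]}(g) = \#\{x : g(x) = x\}$. Now observe that, under the faithful action, the matrix of $g \in I_2(n)$ acting on $V$ in the basis $\{v_x\}$ is a permutation matrix, and the generalized hypothesis of Theorem \ref{thm:rsw-lemma-24} is designed so that the eigenvalues of $r \in I_2(n)$ on $\mathbb{C}^n = \mathbb{C}[X]$ are $1, \zeta, \dots, \zeta^{n-1}$, i.e.\ exactly $a^{n-1}, a^{n-2}b, \dots, b^{n-1}$ with $a = \zeta$, $b = 1$ — wait, more carefully: the eigenvalues of $r^j$ acting as an $n$-cycle (the action $I_2(n) \acts [n]$ restricted to $\langle r \rangle$ is a single $n$-cycle up to relabeling) are $1, \zeta^j, \dots, \zeta^{(n-1)j}$, which are $\{(\zeta^j)^{i} : 0 \le i \le n-1\}$, matching $\{a^{n-1-i}b^i\}$ with $\{a,b\} = \{\zeta^j, 1\}$ only in the $j=1$ normalization; the general pattern $a^{n-1}, a^{n-2}b, \dots, b^{n-1}$ is the set $\{a^{n-1-i}b^i\}$, which for $a = \zeta$, $b = 1$ gives $\{1, \zeta, \dots, \zeta^{n-1}\}$, the full set of $n$-th roots of unity, independent of which primitive root we pick. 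So $\chi_V$ evaluated on $r$ equals $\chi_V(\operatorname{diag}(\text{all } n\text{-th roots of }1))$, and on $r^j$ it equals the same (the eigenvalue multiset of $r^j$ on $\mathbb{C}[X]$ is still all $n$-th roots of unity when $\gcd(j,n)$... ) — here I must be slightly careful when $\gcd(j,n) > 1$, but since $I_2(n) \acts [n]$ being faithful with $\langle r\rangle$ acting as an $n$-cycle forces the eigenvalues of $r^j$ on $\mathbb{C}[X]$ to be the multiset of $n$-th roots of unity raised to the $j$; in any case one computes directly that $p(\chi_{z_1}(r^j), \chi_{-\det}(r^j)) = p(\zeta^j + \zeta^{-j}, -\zeta^{j-j})$; by definition $p(a+b,-ab) = \chi_V(\operatorname{diag}(a^{n-1},\dots,b^{n-1}))$, so setting $a = \zeta^j$, $b = \zeta^{-j}$ and noting $\{ \zeta^{j(n-1)}, \zeta^{j(n-2)}\zeta^{-j}, \dots\} = \{\zeta^{ij} : -(n-1) \le \cdot\}$ collapses to the eigenvalue multiset of $r^j$ on $V$, which equals $\chi_V(r^j) = \#\{x : r^j x = x\}$ by Theorem \ref{thm:rsw-lemma-24} applied to the cyclic subgroup $\langle r \rangle$. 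This handles all rotation classes including the identity ($j = 0$).

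Finally I would check the reflection class. For $s$, we have $(\chi_{z_1}(s), \chi_{-\det}(s)) = (0, 1)$, so I need $p(0, 1) = \#\{x \in X : s(x) = x\}$. Solving $a + b = 0$, $-ab = 1$ gives $\{a,b\} = \{i, -i\}$ — wait, $-ab = 1 \Rightarrow ab = -1$ and $a + b = 0 \Rightarrow b = -a \Rightarrow -a^2 = -1 \Rightarrow a = \pm 1$, so $\{a, b\} = \{1, -1\}$. Then $p(0,1) = \chi_V(\operatorname{diag}(1, -1, 1, -1, \dots, 1))$ — the sequence $a^{n-1}, a^{n-2}b, \dots, b^{n-1}$ with $a = 1$, $b = -1$, which for $n$ odd is $(1, -1, 1, \dots, -1, 1)$ with $(n+1)/2$ ones and $(n-1)/2$ minus-ones. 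This is exactly $\operatorname{diag}$ of the eigenvalues of a reflection acting on $\mathbb{C}[X]$: a reflection in $I_2(n)$ with $n$ odd fixes exactly one point of $[n]$ and swaps the remaining $(n-1)/2$ pairs, so its permutation matrix has eigenvalues $1$ (with multiplicity $(n+1)/2$: one from the fixed point and one $+1$ per transposition) and $-1$ (with multiplicity $(n-1)/2$: one per transposition). Hence $p(0,1) = \chi_V(\text{this diagonal matrix}) = \chi_{\mathbb{C}[X]}(s) = \#\{x : s(x) = x\} = 1$, by Theorem \ref{thm:rsw-lemma-24} again, or more directly since $V$ with the given basis just \emph{is} $\mathbb{C}[X]$ as an $I_2(n)$-representation so $\chi_V(g) = \chi_{\mathbb{C}[X]}(g)$ for all $g$.

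The main obstacle, and the reason this is not entirely trivial, is pinning down the eigenvalue multisets of rotations and reflections on $\mathbb{C}[X]$ and verifying that they coincide with the diagonal argument $(a^{n-1}, a^{n-2}b, \dots, b^{n-1})$ appearing in the definition of $p$ after the substitution $(a+b, -ab) \mapsto$ (the character values of $z_1, -\det$). Once the eigenvalue bookkeeping is set up correctly — using that faithfulness plus $|X| = n$ forces $\langle r\rangle$ to act as a single $n$-cycle and each reflection to act with one fixed point and $(n-1)/2$ transpositions — the identity $\mathbb{C}[X] = p(z_1, -\det)$ in $\Rep(I_2(n))$ follows from the injectivity of the character map, which is the definition of dihedral sieving.
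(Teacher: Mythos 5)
Your proposal is correct and takes essentially the same route as the paper's proof: you compare characters conjugacy class by conjugacy class, matching the eigenvalue multiset of each dihedral element (viewed as a permutation matrix) with $\operatorname{diag}(a^{n-1},a^{n-2}b,\dots,b^{n-1})$ where $a+b=\chi_{z_1}(C)$ and $-ab=\chi_{-\det}(C)$, and conclude via injectivity of the character map into class functions. (One small caveat: your side remark that faithfulness together with $|X|=n$ forces $\langle r\rangle$ to act as a single $n$-cycle is not literally true for composite odd $n$; like the paper's ``straightforward to check'' eigenvalue claim, this implicitly assumes the standard action on the vertices of the $n$-gon, which is the case used in all applications.)
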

\begin{proof}
Let $C$ be a conjugacy class in $I_{2}(n)$ and $X = \frac{s + \sqrt{s^{2} + 4t}}{2}$, $Y = \frac{s - \sqrt{s^{2} + 4t}}{2}$ where $s = \chi_{z_{1}}(C)$ and $t = \chi_{-\det}(C)$. It is straightforward to check that the eigenvalues of any element in $C$ are $X^{n - 1}, X^{n - 2}Y, \dots, Y^{n - 1}$, and that $X + Y = \chi_{z_{1}}(C)$ and $XY = -\chi_{-\det}(C)$. Thus
\begin{align*}
\chi_{V}(C) &= \chi_{V}(X^{n - 1}, X^{n - 2}Y, \dots, XY^{n - 2}, Y^{n - 1}) \\
&= p(X + Y, -XY) \\
&= p(\chi_{z_{1}}(C), \chi_{-\det}(C)).
\end{align*}
and $V = p(z_{1}, -\det)$ in $\Rep(I_{2}(n))$.
\end{proof}

\begin{prop}\label{prop-uptoscalars}
Suppose $V$ is an $I_{2}(n)$-representation and $X$ is a finite $I_{2}(n)$-set indexing a basis $\{v_{x} : x \in X\}$ of $V$ which is permuted up to scalars, that is there is some one-dimensional representation $\rho : G \to \mathbb{C}^{\times}$ such that $g(v_{x}) = \rho(g)^{k}v_{g(x)}$ for all $g \in I_{2}(n)$ and $x \in X$. Suppose further that $V \cong p_{1}(z_{1}, -\det)$ and that $p_{1} = p_{2}p$ with $\rho \cong p_{2}(z_{1}, -\det)$. Then $(X, p, I_{2}(n))$ exhibits dihedral sieving.
\end{prop}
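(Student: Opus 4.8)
The plan is to reduce Proposition~\ref{prop-uptoscalars} to Proposition~\ref{prop-generalization} by stripping off the scalar twist. The key observation is that $V$, as an $I_2(n)$-representation with the given twisted permutation action, is isomorphic to $W \otimes \rho^{\otimes k}$ where $W$ is the genuine permutation representation $\mathbb{C}[X]$ on the basis $\{w_x : x \in X\}$ with $g(w_x) = w_{g(x)}$. Indeed, the map $v_x \mapsto w_x \otimes (e_\rho)^{\otimes k}$ (where $e_\rho$ is a basis vector of the line $\rho$) intertwines the two actions, since $g$ acts on $v_x$ by $\rho(g)^k v_{g(x)}$ and on $w_x \otimes e_\rho^{\otimes k}$ by $w_{g(x)} \otimes (\rho(g) e_\rho)^{\otimes k} = \rho(g)^k (w_{g(x)} \otimes e_\rho^{\otimes k})$. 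Hence in $\Rep(I_2(n))$ we have $[V] = [W] \cdot [\rho]^k$, and since $[\rho] = p_2(z_1, -\det)$ by hypothesis, we get $[W] = [V] / [\rho]^k$ provided $[\rho]$ is a unit in $\Rep(I_2(n))$ — which it is, being a one-dimensional representation (its inverse is the dual $\rho^{-1} = \rho^*$). More directly, the hypothesis $V \cong p_1(z_1, -\det)$ together with $p_1 = p_2 p$ and $\rho \cong p_2(z_1, -\det)$ gives $[V] = [\rho] \cdot p(z_1, -\det)$ in $\Rep(I_2(n))$, so $[W] = [\rho]^{-1}[V] = p(z_1, -\det)$ once we know the class-function evaluation of $[\rho]$ is nonzero on every conjugacy class; but every one-dimensional representation has character values that are roots of unity, hence invertible.

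Concretely, I would argue at the level of characters, which is clean because the character homomorphism $\Rep(I_2(n)) \to \bigoplus_C \mathbb{C}$ is injective. Fix a conjugacy class $C$ of $I_2(n)$. On one hand, $\chi_W(C) = |\{x \in X : g(x) = x\}|$ for $g \in C$ (the permutation character), which is exactly the fixed-point count we want $p(\chi_{z_1}(C), \chi_{-\det}(C))$ to compute. On the other hand, from $V \cong W \otimes \rho^{\otimes k}$ we get $\chi_V(C) = \chi_W(C)\,\chi_\rho(C)^k$, so $\chi_W(C) = \chi_V(C)/\chi_\rho(C)^k$ (legitimate since $\chi_\rho(C) \ne 0$). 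Using $V \cong p_1(z_1, -\det)$ we have $\chi_V(C) = p_1(\chi_{z_1}(C), \chi_{-\det}(C)) = p_2(\chi_{z_1}(C), \chi_{-\det}(C))\, p(\chi_{z_1}(C), \chi_{-\det}(C))$, and using $\rho \cong p_2(z_1, -\det)$ we have $\chi_\rho(C)^k = p_2(\chi_{z_1}(C), \chi_{-\det}(C))^k$. Wait — here I must be careful about the exponent: the twist is by $\rho^{\otimes k}$, so I need $p_1 = p_2^{k} p$, not $p_1 = p_2 p$, unless the statement intends $k$ to be absorbed or $p_2$ to already represent $\rho^{\otimes k}$. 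I would check the intended reading against how the proposition is applied (the $k$-multisubset and $k$-subset cases), and state the identity $[V] = [\rho^{\otimes k}] \cdot p(z_1, -\det)$ in whichever normalization makes $p_2$ the class of the actual twisting representation $\rho^{\otimes k}$; then $\chi_W(C) = p(\chi_{z_1}(C), \chi_{-\det}(C))$ follows by cancellation. This holds for every $C$, so $[W] = p(z_1, -\det)$ in $\Rep(I_2(n))$, which is precisely the assertion that $(X, p, I_2(n))$ exhibits dihedral sieving.

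The main obstacle I anticipate is purely bookkeeping rather than conceptual: pinning down the exponent convention so that $p_1 = p_2 p$ (as literally written) is consistent with $g(v_x) = \rho(g)^k v_{g(x)}$ — most likely the intended meaning is that $p_2$ is the polynomial representing the one-dimensional representation $g \mapsto \rho(g)^k$ (so that the "$k$" is already built into $p_2$), in which case the argument above goes through verbatim with $\chi_\rho(C)^k$ replaced by $p_2(\chi_{z_1}(C), \chi_{-\det}(C))$. A secondary point worth noting explicitly is that one-dimensional representations of $I_2(n)$ are invertible in $\Rep(I_2(n))$: $\det$ satisfies $\det^2 = 1$, and $\chi_b$ (in the even case) similarly squares to $1$, so any tensor product of these is its own inverse; hence division by $[\rho]$ or $[\rho^{\otimes k}]$ is legitimate inside the ring, not just after applying the character map. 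With that settled, the proof is a two-line computation.
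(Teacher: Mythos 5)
Your argument is essentially the paper's own proof: identify $V$ with the twist of the permutation representation $\mathbb{C}[X]$ by the one-dimensional representation, then cancel that representation (a unit in $\Rep(I_2(n))$, since $p_2(z_1,-\det)\cong\rho$) to conclude $\mathbb{C}[X]=p(z_1,-\det)$. The exponent bookkeeping you flag is a real looseness in the statement, and the paper resolves it exactly as you guess, treating $p_2$ as representing the actual twisting one-dimensional representation so that $\rho\,\mathbb{C}[X]=V=p_2(z_1,-\det)\,p(z_1,-\det)$.
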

\begin{proof}
We have $\rho \mathbb{C}[X] = V = p_{2}(z_{1}, -\det)p(z_{1}, -\det)$ in $\Rep(I_{2}(n))$. Since $\rho$ is invertible in $\Rep(I_{2}(n))$ and $\rho = p_{2}(z_{1}, -\det)$, we can cancel it from both sides to get $\mathbb{C}[X] = p(z_{1}, -\det)$.
\end{proof}

\begin{cor}\label{cor-uptoscalars}
Let $n, X, V$ and $p$ be as in Proposition \ref{prop-generalization}, and suppose $p(s, t) = (-t^{k})u(s, t)$ and instead that the basis of $V$ is permuted up to scalars, that is $g(v_{x}) = (\det(g))^{k}v_{g(x)}$. Then $(X, u(s,t), I_{2}(n))$ exhibits dihedral sieving.
\end{cor}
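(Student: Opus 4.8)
The plan is to combine the character computation underlying Proposition \ref{prop-generalization} with a one-dimensional twist; this is exactly the mechanism of Proposition \ref{prop-uptoscalars}.

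First I would observe that the proof of Proposition \ref{prop-generalization} uses only the eigenvalues $X^{n-1}, X^{n-2}Y, \dots, Y^{n-1}$ of an element of a conjugacy class $C$ of $I_2(n)$ inside $\operatorname{GL}_n(\mathbb{C})$, together with the definition of $p$ (which is a statement about $V$ as a $\operatorname{GL}_n(\mathbb{C})$-representation alone); it does not use that the chosen basis of $V$ is permuted \emph{on the nose}. Hence the identity $\chi_V(C) = p(\chi_{z_1}(C), \chi_{-\det}(C))$ still holds under the present, weaker hypothesis, so $V = p(z_1, -\det)$ in $\Rep(I_2(n))$, where $V$ carries the action inherited from $\operatorname{GL}_n(\mathbb{C})$.

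Next I would use the hypothesis $g(v_x) = (\det g)^{k} v_{g(x)}$, which says precisely that this $I_2(n)$-representation is isomorphic to $\det^{k}\otimes\mathbb{C}[X]$ (send $v_x \mapsto w\otimes e_x$, with $w$ spanning $\det^{k}$ and $\{e_x\}$ the standard basis of $\mathbb{C}[X]$). Thus $\det^{k}\,\mathbb{C}[X] = V = p(z_1,-\det)$ in $\Rep(I_2(n))$. Since $\det$ is a unit of $\Rep(I_2(n))$ with $\det^{2}=1$, and since under the generators $z_1, -\det$ the representation $\det$ is the image of the polynomial $-t$ (as $\det = -(-\det)$), multiplying through by $\det^{k}$ and using $p(s,t) = (-t)^{k}u(s,t)$ gives
\[
\mathbb{C}[X] \;=\; \det^{k}\,p(z_1,-\det) \;=\; \bigl((-t)^{2k}u(s,t)\bigr)\big|_{s=z_1,\ t=-\det} \;=\; \det^{2k}\,u(z_1,-\det) \;=\; u(z_1,-\det),
\]
which is exactly the assertion that $(X, u, I_2(n))$ exhibits dihedral sieving. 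Equivalently, this is Proposition \ref{prop-uptoscalars} with the one-dimensional representation $\det$ and $p_2(s,t) = (-t)^{k}$, so that $p_1 = p$ and the polynomial called $p$ there is our $u$.

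I do not expect a genuine obstacle: once Propositions \ref{prop-generalization} and \ref{prop-uptoscalars} are available, the corollary is forced. The one thing requiring care is keeping the determinant \emph{representation} $\det$ of $I_2(n)$ straight from the \emph{generator} $-\det$ of $\Rep(I_2(n))$ — in particular the sign in the factorization $p = (-t)^{k}u$, which is what makes the two occurrences of the twist cancel cleanly.
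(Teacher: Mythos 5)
Your proposal is correct and follows essentially the same route as the paper: it extracts from the proof of Proposition \ref{prop-generalization} the identity $V = p(z_{1},-\det)$ in $\Rep(I_{2}(n))$ (which indeed needs no on-the-nose permutation hypothesis), and then cancels the one-dimensional twist exactly as in Proposition \ref{prop-uptoscalars}, using $\det^{k} = (-t)^{k}|_{t=-\det}$ and $\det^{2}=1$. Your sign bookkeeping (reading the factorization as $p=(-t)^{k}u$) matches what the paper's own proof uses.
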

\begin{proof}
Proposition \ref{prop-generalization} shows that $V = p(z_{1}, -\det)$ in $\Rep(I_{2}(n))$. Proposition \ref{prop-uptoscalars} then implies that $(X, u, I_{2}(n))$ exhibits dihedral sieving since $\det^{k} = (-t)^{k}|_{t=-\det}$.
\end{proof}

\begin{rem}
Instead of writing a symmetric polynomial in $a$ and $b$ as a polynomial in $a + b$ and $ab$, we can equivalently make the substitution $a = \frac{s + \sqrt{s^{2} + 4t}}{2}$ and $b = \frac{s - \sqrt{s^{2} + 4t}}{2}$, as in Proposition \ref{prop:fibonomial-thm}, which satisfies $a + b = s$ and $-ab = t$. In particular, we have $\{n\}_{s=a+b,t=-ab} = a^{n - 1} + a^{n - 2}b + \cdots + ab^{n - 2} + b^{n - 1}$. Stating Proposition \ref{prop-generalization} as we have done makes it clear that the resulting expression is always a polynomial.
\end{rem}

\begin{rem}
When $n$ is even, the eigenvalues of an element of $I_{2}(n)$ (again viewed as a subgroup of $GL_{n}(\mathbb{C})$) have a more complicated description. In particular, a reflection with two fixed points has $n/2 - 1$ eigenvalues which are $-1$ and $n/2 + 1$ eigenvalues which are $+1$, so they cannot be put into a geometric sequence of the form $X^{n - 1}, X^{n - 1}Y, \dots, X^{n - 1}$ as done in the proof of Proposition \ref{prop-generalization}. However, all reflections with no fixed points and rotations still have eigenvalues which can be written in this form.
\end{rem}

\begin{prop}
Let $n$ be odd and $X = \multiset{[n]}{k}$. Then the triple
\[\left(X, \stbinom{n + k  - 1}{k}_{s,t},I_{2}(n)\right)\]
exhibits dihedral sieving.
\end{prop}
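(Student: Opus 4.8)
The plan is to apply Proposition~\ref{prop-generalization} with $V = \operatorname{Sym}^{k}(\mathbb{C}^{n})$, the $k$-th symmetric power of the standard representation of $\operatorname{GL}_{n}(\mathbb{C})$. This is the highest-weight representation $V^{(k)}$, and it carries the basis of degree-$k$ monomials $x_{i_{1}}\cdots x_{i_{k}}$ with $i_{1}\le\cdots\le i_{k}$, which is naturally indexed by $\multiset{[n]}{k}$. Viewing $I_{2}(n)$ inside $\operatorname{GL}_{n}(\mathbb{C})$ through the faithful rotation--reflection action $I_{2}(n)\acts[n]$ on the vertices of a regular $n$-gon, each group element acts by a permutation matrix, and permutation matrices send monomials to monomials. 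Hence this monomial basis is permuted (\emph{not} merely up to scalars) by $I_{2}(n)$, in exactly the way $I_{2}(n)$ permutes $\multiset{[n]}{k}$, so the hypotheses of Proposition~\ref{prop-generalization} are met and it remains only to identify the polynomial $p$.

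Since $\chi_{V} = h_{k} = s_{(k)}$, the required specialization is
\[
p(a+b,-ab) = s_{(k)}\bigl(a^{n-1},\,a^{n-2}b,\,\dots,\,ab^{n-2},\,b^{n-1}\bigr).
\]
Factoring $b^{\,n-1}$ out of every argument and using that $s_{(k)}$ is symmetric and homogeneous of degree $k$, this equals $b^{\,k(n-1)}\,s_{(k)}(1,q,q^{2},\dots,q^{n-1})$ with $q = a/b$, which by the $q$-analogue of the hook-content formula recorded in \eqref{eq-qhook} is $b^{\,k(n-1)}\bigl.\qbinom{n+k-1}{k}_{q}\bigr|_{q=a/b}$.

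Finally I would match this with the Fibonomial coefficient via Proposition~\ref{prop:fibonomial-thm}. Putting $s = a+b$ and $t = -ab$ gives $s^{2}+4t = (a-b)^{2}$, so in the notation of that proposition $X = a$ and $Y = b$, and it yields
\[
\stbinom{n+k-1}{k}_{s,t} = b^{\,k(n-1)}\Bigl.\qbinom{n+k-1}{k}_{q}\Bigr|_{q=a/b},
\]
which is precisely $p(a+b,-ab)$. Therefore $p(s,t) = \stbinom{n+k-1}{k}_{s,t}$ and Proposition~\ref{prop-generalization} gives dihedral sieving. (Alternatively one can factor out $a^{\,n-1}$ instead and reconcile the two resulting forms using the palindromicity of $\qbinom{n+k-1}{k}_{q}$.) I do not anticipate a genuine obstacle: the only points requiring care are verifying that the monomial basis is permuted rather than permuted up to scalars---this is what singles out $\operatorname{Sym}^{k}$ rather than a twist as the correct representation---and keeping the power of $b$ straight when passing between the Schur-function and Fibonomial normalizations.
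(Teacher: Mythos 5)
Your proposal is correct and matches the paper's own argument: the paper also takes $V=\operatorname{Sym}^{k}(\mathbb{C}^{n})$ with its basis indexed by $k$-multisubsets, notes that $I_{2}(n)$ permutes this basis exactly as it permutes $\multiset{[n]}{k}$, and invokes Proposition~\ref{prop-generalization}. Your explicit identification of $p$ with $\stbinom{n+k-1}{k}_{s,t}$ via the specialization in \eqref{eq-qhook} and Proposition~\ref{prop:fibonomial-thm} (with $X=a$, $Y=b$) is exactly the computation the paper leaves implicit, and it checks out.
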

\begin{proof}
Let $V=\operatorname{Sym}^k(\mathbb{C}^n)=V_{\lambda}$, the irreducible representation of $\operatorname{GL}_n(\mathbb{C})$ of highest weight $\lambda=(k)\vdash k$. Let it be equipped with the usual basis of symmetric tensors indexed by $k$-multisubsets of $[n]$. The action of $I_{2}(n)$ on this basis and on $k$-multisubsets of $[n]$ are the same, so we can apply Proposition \ref{prop-generalization}.
\end{proof}

The following lemma is useful for showing that dihedral sieving polynomials obtained from $GL_{n}(\mathbb{C})$ representations have certain product formulas, when the product formula is expressed as a rational function not known to be a polynomial.

\begin{lem}
Let $k$ be a field and suppose $f \in k(s, t)$ is a rational function such that $f(a + b, -ab)$ is a polynomial. Then $f$ is a polynomial.
\end{lem}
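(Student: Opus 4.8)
The plan is to reinterpret the substitution $s \mapsto a+b$, $t \mapsto -ab$ ring-theoretically via the fundamental theorem of symmetric polynomials. Let $\phi \colon k[s,t] \to k[a,b]$ be the $k$-algebra homomorphism with $\phi(s) = a+b$ and $\phi(t) = -ab$. Since $a+b$ and $ab$ are algebraically independent over $k$, so are $a+b$ and $-ab$, and hence $\phi$ is injective; moreover its image is exactly the ring of symmetric polynomials $k[a,b]^{S_2} = k[a+b, ab]$. Because $\phi$ is injective it extends to an embedding of fraction fields $\phi \colon k(s,t) \hookrightarrow k(a,b)$, and with this notation the hypothesis on $f$ says precisely that $\phi(f) \in k[a,b]$.

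Next I would observe that $\phi(f)$ is symmetric in $a$ and $b$. Indeed, let $\tau$ be the field automorphism of $k(a,b)$ swapping $a$ and $b$; then $\tau(a+b) = a+b$ and $\tau(-ab) = -ab$, so $\tau \circ \phi = \phi$ as maps $k(s,t) \to k(a,b)$, whence $\tau(\phi(f)) = \phi(f)$. Thus $\phi(f)$ lies in $k[a,b] \cap k(a,b)^{S_2}$; but a polynomial fixed by $\tau$ as a rational function is fixed by $\tau$ as a polynomial, so $\phi(f) \in k[a,b]^{S_2}$. By the fundamental theorem of symmetric polynomials, $k[a,b]^{S_2} = \phi(k[s,t])$, so there is a $g \in k[s,t]$ with $\phi(g) = \phi(f)$, and injectivity of $\phi$ forces $f = g \in k[s,t]$, as desired.

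There is no serious obstacle here; the only points that need a line of care are the injectivity of $\phi$ (equivalently, the algebraic independence of the elementary symmetric polynomials) and the identification $k[a,b]^{S_2} = k[a+b, ab]$, both of which are the fundamental theorem of symmetric polynomials. If one prefers to avoid quoting that theorem, an alternative route is to argue via integral closure: $k[a,b]$ is integral over $\phi(k[s,t]) = k[a,b]^{S_2}$ (each of $a$ and $b$ satisfies $x^2 - (a+b)x + ab = 0$), and $k[s,t]$ is a UFD, hence integrally closed in $k(s,t)$; since $\phi(f) \in k[a,b]$ is then integral over $\phi(k[s,t])$, the element $f$ is integral over $k[s,t]$ and therefore lies in $k[s,t]$.
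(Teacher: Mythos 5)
Your proof is correct and takes essentially the same route as the paper's: the paper's one-line argument is precisely that $f(a+b,-ab)$ is a symmetric polynomial and hence lies in the subring $k[a+b,-ab]$, so $f$ is a polynomial, while you have simply made explicit the injectivity of the substitution map $\phi$ and its extension to fraction fields that the paper leaves implicit. (Your integral-closure aside is a pleasant alternative, but the main argument coincides with the paper's.)
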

\begin{proof}
It is clear that $f(a + b, -ab)$ is symmetric in $a$ and $b$, and since it is a polynomial it is in the polynomial subring $k[a + b, -ab]$. Hence $f$ is a polynomial.
\end{proof}

The main example of the use of this lemma is to prove that the following analogue of the hook-content formula is a polynomial.

\begin{prop}\label{prop-hookcontent}
Let $(\lambda_{1}, \dots, \lambda_{n}) = \lambda \vdash k$ be a partition of $k$. Then
\[
s_{\lambda}(a^{n - 1}, a^{n - 2}b, \dots, ab^{n - 2}, b^{n - 1}) = \left((-t)^{b(\lambda)} \prod_{x \in \lambda} \frac{\{n + c(x)\}_{s,t}}{\{h(x)\}_{s,t}} \right)_{s = a + b, t = -ab}
\]
where $x \in \lambda$ runs over cells of $\lambda$, $h(x)$ is the hooklength of $x$ (total number of cells weakly to the right of $x$ or strictly below $x$), $c(x)$ is the hook content $j - i$ when cell $x$ is in row $i$ and column $j$, and
\[
b(\lambda) = \sum _{i} (i - 1)\lambda_{i}.
\]
\end{prop}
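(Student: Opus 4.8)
The plan is to reduce the claimed identity to the classical $q$-analogue of the hook-content formula, Equation \eqref{eq-qhook}, via the substitution relating generalized Fibonacci polynomials to $q$-integers given in Proposition \ref{prop:fibonomial-thm}. First I would set $X = \frac{s + \sqrt{s^{2}+4t}}{2}$ and $Y = \frac{s - \sqrt{s^{2}+4t}}{2}$, so that with $s = a+b$ and $t = -ab$ we may take $X = a$ and $Y = b$ (after possibly swapping, which is harmless by symmetry). Since $s_{\lambda}$ is a Schur polynomial, homogeneous of degree $k$, evaluating at the geometric-like sequence $(a^{n-1}, a^{n-2}b, \dots, b^{n-1})$ and factoring out the common scaling gives
\[
s_{\lambda}(a^{n-1}, a^{n-2}b, \dots, b^{n-1}) = b^{(n-1)k}\, s_{\lambda}(1, q, q^{2}, \dots, q^{n-1})\Big|_{q = a/b},
\]
because each of the $n$ arguments is $b^{n-1}$ times a power of $q = a/b$, and a Schur polynomial in $n$ variables scaled uniformly by $c$ scales by $c^{k}$ (here $c = b^{n-1}$). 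This is just the principal specialization, so by \eqref{eq-qhook} it equals
\[
b^{(n-1)k}\left( q^{b(\lambda)} \prod_{x \in \lambda} \frac{[n + c(x)]_{q}}{[h(x)]_{q}} \right)_{q = a/b}.
\]

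Next I would translate the right-hand side back using Proposition \ref{prop:fibonomial-thm}: for any $m$ we have $[m]_{q}|_{q = a/b} = b^{-(m-1)} \{m\}_{s,t}$ with $s = a+b$, $t = -ab$. Applying this to every hook-content factor $[n+c(x)]_{q}$ in the numerator and every hook-length factor $[h(x)]_{q}$ in the denominator, the product over cells $x \in \lambda$ picks up an overall power of $b$ equal to $-\sum_{x}\big((n+c(x)-1) - (h(x)-1)\big) = -\sum_{x}(n + c(x) - h(x))$. There is a standard identity $\sum_{x \in \lambda}(h(x) - c(x)) = \sum_{i}\binom{\lambda_{i}}{2} + \cdots$; more directly, $\sum_{x}(n - h(x) + c(x))$ counts the exponent in the hook-content formula and equals $k(n-1) - b(\lambda) - (\text{the exponent on } q)$ in a way that I expect to match $b(\lambda)$ precisely after also accounting for the $q^{b(\lambda)} = (a/b)^{b(\lambda)}$ factor, which contributes $a^{b(\lambda)}b^{-b(\lambda)}$. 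Combining all powers of $a$ and $b$, the $a^{b(\lambda)}b^{-b(\lambda)}$ should recombine with the sign into $(-ab)^{b(\lambda)} = (-t)^{b(\lambda)}$ up to the factor $(-1)^{b(\lambda)}$, and the remaining powers of $b$ should cancel the $b^{(n-1)k}$ prefactor. Carrying out this bookkeeping carefully yields exactly $\left((-t)^{b(\lambda)}\prod_{x}\{n+c(x)\}_{s,t}/\{h(x)\}_{s,t}\right)_{s=a+b,\,t=-ab}$.

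The main obstacle is the exponent accounting: I must verify that the accumulated power of $b$ coming from the $\{m\}_{s,t} = b^{m-1}[m]_{q}$ conversions over all cells, together with the $b^{(n-1)k}$ homogeneity prefactor and the $b$-contribution of $q^{b(\lambda)}$, cancels completely, and that the sign $(-1)^{b(\lambda)}$ is produced correctly from the $(-ab)^{b(\lambda)}$ term. The key numerical input is that $\sum_{x\in\lambda}(n + c(x)) - \sum_{x\in\lambda}h(x) = k(n-1) - b(\lambda) + b(\lambda)$-type cancellation; concretely one uses that $\sum_{x}h(x) = \sum_{x}(n + c(x)) - \big((n-1)k - b(\lambda)\big) - b(\lambda)$ follows from comparing degrees on both sides of the original hook-content formula (the left side $s_{\lambda}(1,q,\dots,q^{n-1})$ has $q$-degree $b(\lambda) + \sum_{x}c(x) \cdots$), so the identity is forced. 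Finally, having shown $s_{\lambda}(a^{n-1},\dots,b^{n-1})$ equals the claimed expression as an a priori rational function in $a,b$ that is in fact a polynomial (it is a Schur polynomial), the preceding Lemma guarantees that the rational function $(-t)^{b(\lambda)}\prod_{x}\{n+c(x)\}_{s,t}/\{h(x)\}_{s,t}$ is itself a polynomial in $s,t$, which is the substantive conclusion.
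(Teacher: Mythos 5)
Your proposal takes essentially the same route as the paper: factor out $b^{k(n-1)}$ by homogeneity, apply the $q$-hook-content formula \eqref{eq-qhook} at $q=a/b$, and convert each $[m]_{q=a/b}$ to $b^{1-m}\{m\}_{s,t}$ via Proposition \ref{prop:fibonomial-thm}. The one substantive step, however --- the power-of-$b$ bookkeeping --- is asserted rather than carried out, and your description of it (``$k(n-1)-b(\lambda)+b(\lambda)$-type cancellation,'' ``the identity is forced'') is too vague to check. To close it, note that the total exponent of $b$ is
\[
k(n-1) \;-\; b(\lambda) \;+\; \sum_{x\in\lambda}\bigl(h(x)-c(x)-n\bigr),
\]
and use the standard identities $\sum_{x\in\lambda} c(x) = b(\lambda') - b(\lambda)$ and $\sum_{x\in\lambda} h(x) = b(\lambda') + b(\lambda) + k$ to evaluate this as exactly $b(\lambda)$; the resulting $a^{b(\lambda)}b^{b(\lambda)} = (ab)^{b(\lambda)} = \bigl((-t)^{b(\lambda)}\bigr)_{t=-ab}$ then gives the stated prefactor (with no leftover sign to worry about). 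Your closing remark that the preceding Lemma upgrades the rational function $(-t)^{b(\lambda)}\prod_x \{n+c(x)\}_{s,t}/\{h(x)\}_{s,t}$ to a genuine polynomial in $s,t$ is correct and is indeed the intended use of that lemma.
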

\begin{proof}
We have, using the $q$-hook content formula stated in Equation~\ref{eq-qhook},
\begin{align*}
s_{\lambda}(a^{n - 1}, a^{n - 2}b, \dots, ab^{n - 2}, b^{n - 1}) &= b^{(\deg s_{\lambda})(n - 1)}s_{\lambda}(1, a/b, (a/b)^{2}, \dots, (a/b)^{n - 1}) \\
&= a^{b(\lambda)} b^{-b(\lambda) + |\lambda|(n - 1)} \prod _{x \in \lambda} \frac{[n + c(x)]_{q=a/b}}{[h(x)]_{q=a/b}} \\
&= a^{b(\lambda)} b^{-b(\lambda) + |\lambda|(n - 1)} \prod _{x \in \lambda} \frac{b^{-(n + c(x) - 1)}\{n + c(x)\}_{s=a+b, t=-ab}}{b^{-(h(x) - 1)}\{h(x)\}_{s=a+b, t=-ab}} \\
&= a^{b(\lambda)} b^{-b(\lambda) + |\lambda|(n - 1)} \prod _{x \in \lambda} \frac{b^{-n - c(x))}\{n + c(x)\}_{s=a+b, t=-ab}}{b^{-h(x)}\{h(x)\}_{s=a+b, t=-ab}} \\
&= a^{b(\lambda)} b^{-b(\lambda) + |\lambda|(n - 1) + \sum _{x \in \lambda } h(x) - c(x) - n} \prod _{x \in \lambda} \frac{\{n + c(x)\}_{s=a+b, t=ab}}{\{h(x)\}_{s=a+b, t=-ab}}.
\end{align*}
where we use the fact that $[n]_{q=a/b}=b^{1-n}\{n\}_{s=a+b,t=-ab}$ via Proposition~\ref{prop:fibonomial-thm} to establish the third equality. For the exponent, we have
\begin{align*}
-b(\lambda) + |\lambda|(n - 1) + \sum _{x \in \lambda } h(x) - c(x) - n &= -b(\lambda) - |\lambda| + \sum _{x \in \lambda} h(x) - \sum _{x \in \lambda} c(x) \\
&= -b(\lambda) - |\lambda| + b(\lambda') + b(\lambda) + |\lambda| - b(\lambda') + b(\lambda) \\
&= b(\lambda)
\end{align*}
following from the identities
\begin{align*}
\sum _{x \in \lambda} c(x) &= b(\lambda') - b(\lambda) \\
\sum _{x \in \lambda} h(x) &= b(\lambda') + b(\lambda) + k
\end{align*}
where $\lambda'$ is the conjugate of $\lambda$.
Since $(ab)^{b(\lambda)} = (-t)^{b(\lambda)}|_{t=ab}$, the result follows.
\end{proof}

The technique of applying Theorem \ref{thm:rsw-lemma-24} to prove cyclic sieving for $k$-subsets and $k$-multisubsets of $[n]$ immediately generalizes with the above proposition.

\begin{prop}\label{prop:ksubsets}
Let $n$ be odd and let $X = \binom{[n]}{k}$. Then the triple
\[\left(X, \stbinom{n}{k}_{s,t}, I_{2}(n)\right)\]
exhibits dihedral sieving.
\end{prop}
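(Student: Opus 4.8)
The plan is to prove the required identity $\mathbb{C}[X]=\stbinom{n}{k}_{s,t}(z_1,-\det)$ in $\Rep(I_2(n))$ directly, checking it on each conjugacy class of $I_2(n)$; since $n$ is odd these are the identity, the rotation classes $\{r^{j},r^{-j}\}$ for $1\le j\le(n-1)/2$, and the single class of reflections. I would take $V=\bigwedge^{k}\mathbb{C}^{n}=V_{(1^{k})}$, viewed as an $I_2(n)$-representation via the permutation embedding $I_2(n)\hookrightarrow\operatorname{GL}_n(\mathbb{C})$ coming from $I_2(n)\acts [n]$ — the exterior analogue of the symmetric power used for $k$-multisubsets. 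Specializing Proposition \ref{prop-hookcontent} to $\lambda=(1^{k})$, where $b(\lambda)=\binom{k}{2}$ and the hook--content product over the cells of $\lambda$ collapses to $\stbinom{n}{k}_{s,t}$, gives the polynomial identity
\[
e_{k}(a^{n-1},a^{n-2}b,\dots,b^{n-1})=(ab)^{\binom{k}{2}}\,\stbinom{n}{k}_{a+b,\,-ab}.
\]

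On the rotation classes (and the identity, as the case $j=0$) the argument would run as follows. For $g=r^{j}$ the eigenvalues of $g$ on $\mathbb{C}^{n}$ are $X^{n-1},X^{n-2}Y,\dots,Y^{n-1}$ with $X=\zeta^{j}$, $Y=\zeta^{-j}$, $\zeta=e^{2\pi i/n}$, exactly as in the proof of Proposition \ref{prop-generalization}; here $X+Y=\chi_{z_1}(g)$ and $XY=1=-\chi_{-\det}(g)$. Hence $\chi_{V}(g)=e_{k}(X^{n-1},\dots,Y^{n-1})=(XY)^{\binom{k}{2}}\stbinom{n}{k}_{\chi_{z_1}(g),\chi_{-\det}(g)}=\stbinom{n}{k}_{\chi_{z_1}(g),\chi_{-\det}(g)}$, the prefactor being $1$. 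On the other hand every cycle of $g$ on $[n]$ has length dividing $n$, hence odd, so $\operatorname{sgn}(g|_{S})=1$ for each $g$-invariant $k$-subset $S$, and therefore $\chi_{V}(g)=\sum_{S:\,g(S)=S}\operatorname{sgn}(g|_{S})=\#\{S:g(S)=S\}=\chi_{\mathbb{C}[X]}(g)$. This reproduces the cyclic sieving of Example \ref{ex:sieving-examples}(2), now phrased through the Fibonomial.

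The reflection class is where the argument must split off from the $k$-multisubset proof: the wedge basis is permuted by a reflection only up to signs that genuinely depend on $S$, so $V$ is not isomorphic to $\mathbb{C}[X]$ as an $I_2(n)$-module (in contrast to the $\operatorname{Sym}^{k}$ case), and the reflection class needs a separate computation rather than an appeal to Proposition \ref{prop-generalization} or Corollary \ref{cor-uptoscalars}. For a reflection $g$ we have $\chi_{z_1}(g)=0$ and $\chi_{-\det}(g)=1$, and Proposition \ref{prop:fibonomial-thm} with $s=0$, $t=1$ (so $X=1$, $Y=-1$) gives $\stbinom{n}{k}_{0,1}=(-1)^{k(n-k)}\left.\qbinom{n}{k}_{q}\right|_{q=-1}$. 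Since $n$ is odd, $k(n-k)$ is even, so the sign vanishes and $\stbinom{n}{k}_{0,1}=\left.\qbinom{n}{k}_{q}\right|_{q=-1}=\binom{(n-1)/2}{\lfloor k/2\rfloor}$ by the classical value of the Gaussian binomial at $q=-1$. On the other side, a reflection in $I_2(n)$ fixes one vertex and transposes the remaining $n-1$ vertices in $(n-1)/2$ pairs, so a $g$-invariant $k$-subset is a choice of $\lfloor k/2\rfloor$ of these pairs together with the fixed vertex precisely when $k$ is odd; counting these gives $\chi_{\mathbb{C}[X]}(g)=\binom{(n-1)/2}{\lfloor k/2\rfloor}$ as well.

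Once the two class functions are matched on every conjugacy class, $\mathbb{C}[X]=\stbinom{n}{k}_{s,t}(z_1,-\det)$ in $\Rep(I_2(n))$, which is precisely dihedral sieving for $(X,\stbinom{n}{k}_{s,t},I_2(n))$. I expect the reflection class to be the only real difficulty: the comparison $\chi_V=\chi_{\mathbb{C}[X]}$ fails there, so it has to be replaced by the explicit fixed-subset count above together with the small parity observation — $k(n-k)$ even when $n$ is odd — needed to absorb the sign $(-1)^{k(n-k)}$ produced by Proposition \ref{prop:fibonomial-thm}.
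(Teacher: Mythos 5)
Your proof is correct, but it takes a different route from the paper's at the decisive step. The paper also starts from $V=\Lambda^{k}\mathbb{C}^{n}$ and the hook--content specialization (Proposition \ref{prop-hookcontent} with $\lambda=(1^{k})$), but instead of comparing characters class by class it constructs an explicit basis $\{w_{A}\}$ of $\Lambda^{k}\mathbb{C}^{n}$ indexed by $k$-subsets that is permuted \emph{up to the scalar} $\det(g)^{\binom{k}{2}}$: it groups $\binom{[n]}{k}$ into $C_{n}$-orbits, picks distinguished representatives compatible with the reflection $s$, and inserts signs $(-1)^{\binom{k}{2}}$ on half of each $I_{2}(n)$-orbit, yielding an isomorphism of $\Lambda^{k}\mathbb{C}^{n}$ with the $\det^{\binom{k}{2}}$-twist of $\mathbb{C}[X]$; Corollary \ref{cor-uptoscalars} then cancels the twist. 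You bypass that construction and the up-to-scalars machinery entirely: on rotations you observe that all cycles have odd length, so the wedge character literally equals the fixed-subset count, and the prefactor $(XY)^{\binom{k}{2}}=1$ makes the hook--content identity give exactly $\stbinom{n}{k}_{\chi_{z_1},\chi_{-\det}}$; on the reflection class you abandon $V$ (correctly noting the signs there genuinely depend on the subset, which is precisely the obstruction the paper's twisted basis is built to absorb) and instead match the direct count $\binom{(n-1)/2}{\lfloor k/2\rfloor}$ of fixed $k$-subsets against $\stbinom{n}{k}_{0,1}=(-1)^{k(n-k)}\qbinom{n}{k}_{q=-1}$ via Proposition \ref{prop:fibonomial-thm}, using that $k(n-k)$ is even for $n$ odd. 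Both arguments are sound; the paper's buys a genuine $I_{2}(n)$-module isomorphism (a stronger, more structural statement fitting the Proposition \ref{prop-generalization}/Corollary \ref{cor-uptoscalars} framework), while yours is more elementary and self-contained, at the cost of being a pointwise character verification that reproves the rotation case of cyclic sieving and handles the reflection class by bare enumeration.
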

\begin{proof}
Using Proposition \ref{prop-hookcontent} and Corollary \ref{cor-uptoscalars}, it suffices to show that there is a basis
\[\left\{w_{A} : A \in \binom{[n]}{k}\right\}\] of $\Lambda^{k}\mathbb{C}^{n}$ which is permuted up to scalars by the action of $I_{2}(n)$, that is $x(w_{i}) = \det(x)^{\binom{k}{2}}w_{x(i)}$ for all $x \in I_{2}(n)$.

We construct the basis as follows. First, group $\binom{[n]}{k}$ into orbits under the action by the cyclic group $C_{n}$. For each orbit we will choose a distinguished representative such that if $A$ is distinguished then $r(A)$ is also distinguished. Given a subset $A = \{i_{1}, \dots, i_{k}\}$ with $i_{1} < \cdots < i_{k}$, let $v_{A} = e_{i_{1}} \wedge \cdots \wedge e_{i_{k}}$.

Since the reflection in $I_{2}(n)$ has order two, each orbit of $\binom{[n]}{k}$ under the $I_{2}(n)$ action is a union of either two or one orbits of the $C_{n}$ action. Suppose we have two $C_{n}$-orbits whose union is a single $I_{2}(n)$-orbit. Then choose an arbitrary element $A$ of one orbit to be distinguished, and take $B = s(A)$ to be the distinguished element of the other orbit. Form the set
\[
\{v_{A}, c(v_{A}), \dots, c^{j - 1}(v_{A}), (-1)^{\binom{k}{2}}v_{B}, (-1)^{\binom{k}{2}}c(v_{B}), \dots, (-1)^{\binom{k}{2}}c^{j-1}(v_{B})\} \subseteq \wedge^{k}\mathbb{C}^{n}
\]
where the orbits each contain $j$ subsets. These vectors are linearly independent and permuted up to scalars by all elements of $\{r^{0}, \dots, r^{n - 1}, s\}$ since this holds for $v_{A}$ and because of the relation $sr^{i} = r^{-i}s$. These elements also generate $I_{2}(n)$, so in fact all elements of $I_{2}(n)$ permute these vectors up to the desired scalars.

In the case where we have a $C_{n}$-orbit which is also an $I_{2}(n)$-orbit, for a given $A$ in the orbit we have $s(A) = r^{j}(A)$ for some $j$, so $r^{-j}(s(A)) = A$. Now take the subset $\{v_{A}, c(v_{A}), \dots, c^{n-1}(v_{A})\}$. Then
\[ r^{-j}(s(v_{A})) = e_{i_{k}} \wedge \cdots \wedge e_{i_{1}} = (-1)^{\binom{k}{2}}(e_{i_{1}} \wedge \cdots \wedge e_{i_{k}}) = (-1)^{\binom{k}{2}}v_{A}. \]
We have the relation $(r^{-j}s)r^{i} = r^{-i}(r^{-j}s)$ analogous to the one above, so this set is also permuted up to scalars.
Taking the union of these subsets gives the desired basis of $\Lambda^{k}\mathbb{C}^{n}$.
\end{proof}

\begin{rem}
Dihedral sieving for $k$-subsets and $k$-multisubsets of $[n]$ can be also proven using direct enumeration.
\end{rem}


\section{\texorpdfstring{Dihedral Action on Non-Crossing Partitions of $\{1,\ldots,n\}$}{Dihedral Action on Non-Crossing Partitions of \{1, ..., n\}}}

There is an action by $I_2(n)$ on the non-crossing partitions of $[n]$ and also on the non-crossing partitions with a fixed number of parts. The character values for the corresponding representations of both actions were studied by Ding in 2016 in \cite{ding}. We show, for odd $n$, these actions are both instances of dihedral sieving using the natural $(s,t)$-analogue of the Catalan number as the generating polynomial.

\begin{prop}
Let $n$ be odd and $X = \{\text{non-crossing partitions of $[n]$}\}$. Then the triple
\[\left(X, \frac{1}{\{n + 1\}_{s,t}}\stbinom{2n}{n}_{s,t}, I_{2}(n)\right)\]
exhibits dihedral sieving.
\end{prop}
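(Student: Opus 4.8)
The plan is to verify dihedral sieving one conjugacy class at a time. Writing $P(s,t)=\frac{1}{\{n+1\}_{s,t}}\stbinom{2n}{n}_{s,t}$, the remark following the definition of $G$-sieving (together with injectivity of the character map on $\Rep(I_2(n))$) reduces the statement to showing that, for every conjugacy class $C$ of $I_2(n)$,
\[
P\bigl(\chi_{z_1}(C),\,\chi_{-\det}(C)\bigr)\;=\;\chi_{\mathbb{C}[X]}(C)\;=\;\#\{x\in X : g\cdot x = x\},\qquad g\in C.
\]
First I would record that $P$ is a genuine element of $\mathbb{Z}[s,t]$: by Proposition~\ref{prop:fibonomial-thm}, putting $a=\tfrac{s+\sqrt{s^2+4t}}{2}$ and $b=\tfrac{s-\sqrt{s^2+4t}}{2}$ (so $a+b=s$, $-ab=t$) yields the identity $P(a+b,-ab)=b^{n(n-1)}C_n(a/b)$, which is a polynomial in $a,b$ since $C_n(q)$ is a polynomial of degree $n(n-1)$; it is manifestly symmetric in $a,b$, hence lies in $\mathbb{Z}[a+b,ab]$, and the lemma on rational functions $f$ with $f(a+b,-ab)$ polynomial gives $P\in\mathbb{Z}[s,t]$.

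For $n$ odd the conjugacy classes are the identity, the classes $\{r^j,r^{-j}\}$ for $1\le j\le(n-1)/2$, and the single class of all $n$ reflections, on which $(\chi_{z_1},\chi_{-\det})$ takes the values $(2,-1)$, $(2\cos(2\pi j/n),-1)$, and $(0,1)$ respectively (the reflection generator of $I_2(n)$ acts on $\mathbb{C}^2$ as a determinant $-1$, trace $0$ matrix). I would then evaluate $P$ via the identity above. At $(2,-1)$ one has $\{m\}_{2,-1}=m$, so $P(2,-1)=\tfrac{1}{n+1}\binom{2n}{n}=C_n=|X|$, matching $\chi_{\mathbb{C}[X]}(e)$. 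At $(2\cos\theta,-1)$ with $\theta=2\pi j/n$ one gets $a=e^{i\theta}$, $b=e^{-i\theta}$, hence $b^{n(n-1)}=e^{-2\pi i(n-1)j}=1$ and $a/b=\zeta^{2j}$ with $\zeta=e^{2\pi i/n}$, so $P=C_n(\zeta^{2j})$; by the cyclic sieving phenomenon for non-crossing partitions under rotation (Example~\ref{ex:sieving-examples}(3)) this equals the number of non-crossing partitions fixed by $r^{2j}$, and since $n$ is odd $\langle r^{2j}\rangle=\langle r^{j}\rangle$, so it equals $\#\mathrm{Fix}_X(r^j)=\chi_{\mathbb{C}[X]}(r^j)$. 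At $(0,1)$ one gets $a=1$, $b=-1$, hence $P(0,1)=C_n(-1)$; using $C_n(q)=\qbinom{2n}{n}_q-q\qbinom{2n}{n+1}_q$ (which follows from $[n+1]_q-q[n]_q=1$) together with the standard specialization $\qbinom{A}{B}_{-1}=\binom{\lfloor A/2\rfloor}{\lfloor B/2\rfloor}$ when $B$ is even or $A$ is odd and $0$ otherwise, I obtain $C_n(-1)=\binom{n}{(n+1)/2}$ for $n$ odd.

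It remains to identify $\binom{n}{(n+1)/2}$ with the number of non-crossing partitions of $[n]$ fixed by a reflection, and this is the main obstacle — the other three cases are algebra plus the known cyclic sieving result. I would settle it either by quoting Ding's computation of the character of $\mathbb{C}[X]$ on reflections from \cite{ding}, or by a direct argument: a reflection of the $n$-gon ($n=2m+1$) fixes one vertex and swaps the remaining vertices in $m$ pairs, so a reflection-invariant non-crossing partition is determined by its restriction to a half of the polygon together with the record of which blocks meet the axis of symmetry, and one checks this data ranges over a set of size $\binom{2m+1}{m}=\binom{n}{(n+1)/2}$. With this in hand all four evaluations agree with $\chi_{\mathbb{C}[X]}$, and dihedral sieving follows.
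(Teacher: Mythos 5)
Your proposal is correct and follows essentially the same route as the paper: reduce to a class-by-class character check, use Proposition \ref{prop:fibonomial-thm} to specialize the Fibonomial Catalan number to $C_n(q)$ at roots of unity, invoke the known cyclic sieving (Example \ref{ex:sieving-examples}(3)) for the rotation classes, and match $C_n(-1)=\binom{n}{\lfloor n/2\rfloor}$ with the reflection fixed-point count via Ding's thesis \cite{ding}. Your handling of the small details the paper glosses over (the $a/b=\xi_n^{2j}$ versus $\xi_n^{j}$ issue, resolved since $\langle r^{2j}\rangle=\langle r^{j}\rangle$ for $n$ odd, and the explicit evaluation of $C_n(-1)$) is accurate and welcome, but does not change the method.
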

\begin{proof}
Again, let $\xi_n$ be the primitive $n$-th root of unity $e^{2\pi i/n}$ and let $C$ be a conjugacy class of $I_2(n)$.

First, we compute the character values of $\mathbb{C}[X]$. We have that $C_n(\xi_n^{\ell})$ counts the fixed points of the action of $\{r^{\ell},r^{n-\ell}\}$ by cyclic sieving. \cmt{}  Next, we have that $\binom{n}{\lfloor n/2\rfloor}$, or $C_n(-1)$, is the number of fixed points of $\{r,sr,sr^2,\ldots\}$ by Theorem 2.1.5 of \cite{ding}.

Using the same notation as before, we consider the generalized  Catalan number/sequence in Section 5 of \cite{acms} ${C_{\{n\}}}_{s,t}:=\frac{1}{\{n+1\}_{s,t}}\stbinom{n}{k}_{s,t}$ where we use the specialization $(s,t)=(z_1,-\det)$. Then, we use the fact that
$$\{n+1\}_{s,t}=Y^{n}[n+1]_{X/Y}=\begin{cases}
[n+1]_{\xi_n^{\ell}} & \text{if $C=\{r^{\ell},r^{n-\ell}\}$}\\
[n+1]_{\xi_2} & \text{if $C=\{s,sr,sr^2,\ldots\}$}
\end{cases}$$
to get
$${C_{\{n\}}}_{(z_1(C),-\det(C))}=\begin{cases}
C_n(q)|_{q=\xi_n^{\ell}} & \text{if $C=\{r^{\ell},r^{n-\ell}\}$}\\
C_n(q)|_{q=\xi_2} & \text{if $C=\{s,sr,sr^2,\ldots\}$}
\end{cases}$$
where $C_{\{n\}}$ is well-known to be a polynomial in $s,t$ with integral coefficients. The claim follows.
\end{proof}

\begin{prop}
Let $n$ be odd and $X = \{\text{non-crossing partitions of $[n]$ with $n - k$ blocks}\}$. Then the triple
\[\left(X, \frac{1}{\{n\}_{s,t}}\stbinom{n}{k}_{s,t}\stbinom{n}{k + 1}_{s,t}, I_{2}(n)\right)\]
exhibits dihedral sieving.
\end{prop}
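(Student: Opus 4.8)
The plan is to mirror the structure of the preceding proof for non-crossing partitions of $[n]$, replacing MacMahon's $q$-Catalan number $C_n(q)$ with the Narayana-type $q$-analogue $\frac{1}{[n]_q}\qbinom{n}{k}_q\qbinom{n}{k+1}_q$ appearing in Example \ref{ex:sieving-examples}(4), and invoking the identity between the Fibonomial expression and its $q$-specialization from Proposition \ref{prop:fibonomial-thm}. Concretely, let $C$ be a conjugacy class of $I_2(n)$, write $s=\chi_{z_1}(C)$ and $t=\chi_{-\det}(C)$, and set $X=\frac{s+\sqrt{s^2+4t}}{2}$, $Y=\frac{s-\sqrt{s^2+4t}}{2}$ as before, so that $X+Y=s$ and $XY=-t$. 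The goal is to show $\chi_{\mathbb{C}[X]}(C)$ equals the given rational expression in $s,t$ evaluated at this $(s,t)$.

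First I would record the two needed fixed-point counts. By cyclic sieving for non-crossing partitions of $[n]$ with $n-k$ parts (Example \ref{ex:sieving-examples}(4)), the number of fixed points of $\{r^\ell, r^{n-\ell}\}$ acting on $X$ is $\frac{1}{[n]_q}\qbinom{n}{k}_q\qbinom{n}{k+1}_q q^{k(k+1)}\big|_{q=\xi_n^\ell}$. For the reflection class $\{s, sr, sr^2,\dots\}$ (all reflections are conjugate since $n$ is odd), I would appeal to the character computation of Ding \cite{ding} — the same source used in the previous proposition — to get the number of fixed points; this should be the value of the same $q$-analogue at $q=\xi_2=-1$, possibly up to a harmless sign/power coming from the $q^{k(k+1)}$ factor, which I would track carefully.

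Next I would use the degree-homogeneity trick from Proposition \ref{prop:fibonomial-thm}: $\{n\}_{s,t}=Y^{n-1}[n]_{q}\big|_{q=X/Y}$ and $\stbinom{n}{k}_{s,t}=Y^{k(n-k)}\qbinom{n}{k}_q\big|_{q=X/Y}$. Substituting into $\frac{1}{\{n\}_{s,t}}\stbinom{n}{k}_{s,t}\stbinom{n}{k+1}_{s,t}$, the powers of $Y$ combine to $Y^{-(n-1)+k(n-k)+(k+1)(n-k-1)}=Y^{?}$, and I would check that this exponent equals $k(k+1)$ so that the $Y$-power exactly reproduces the $q^{k(k+1)}$ factor after rewriting $q^{k(k+1)}=(X/Y)^{k(k+1)}\cdot Y^{k(k+1)}\cdot\dots$ — in other words, that $\frac{1}{\{n\}}\stbinom{n}{k}\stbinom{n}{k+1}$ at $(s,t)=(z_1(C),-\det(C))$ equals $\frac{1}{[n]_q}\qbinom{n}{k}_q\qbinom{n}{k+1}_q q^{k(k+1)}\big|_{q=X/Y}$, and that $X/Y$ equals $\xi_n^\ell$ (resp. $-1$) on the rotation classes (resp. reflection class). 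I would also cite the fact, noted in \cite{acms} or provable directly, that this $(s,t)$-expression is a genuine polynomial with integer coefficients (the $(s,t)$-Narayana number), so it lies in $\Rep(I_2(n))$.

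The main obstacle is bookkeeping the exponent: matching the shift factor $q^{k(k+1)}$ in the cyclic sieving polynomial against the $Y$-powers produced by Proposition \ref{prop:fibonomial-thm}, and in particular confirming the exponent identity $-(n-1)+k(n-k)+(k+1)(n-k-1)=k(k+1)$ and that the leftover $X$-power is absorbed correctly. A secondary subtlety is the reflection class: I must verify that Ding's count is exactly the $q=-1$ specialization including the $q^{k(k+1)}=(-1)^{k(k+1)}=1$ factor (which is harmless since $k(k+1)$ is even), and that when $n$ is odd the two cyclic orbits structure does not introduce an extra factor of $2$, just as in the previous proof. Once these sign-and-exponent checks are done, the conclusion $\mathbb{C}[X]=\frac{1}{\{n\}_{s,t}}\stbinom{n}{k}_{s,t}\stbinom{n}{k+1}_{s,t}$ in $\Rep(I_2(n))$ follows by comparing characters on every conjugacy class.
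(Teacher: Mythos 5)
Your architecture is the same as the paper's: compare characters class by class, take the rotation-class values from cyclic sieving (Theorem 7.2 of \cite{rsw}) and the reflection-class value $N(n,k;-1)$ from Ding, convert the Fibonomial expression to $q$-binomials via Proposition \ref{prop:fibonomial-thm}, and separately record that the $(s,t)$-Narayana number is a polynomial (the paper does this by an explicit Fibonomial identity). So the plan is sound in outline.

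The step you yourself flag as the main obstacle is, however, resolved incorrectly as written. The exponent $-(n-1)+k(n-k)+(k+1)(n-k-1)$ equals $2k(n-k-1)$, which is not $k(k+1)$ in general (for $n=5$, $k=1$ it is $6$, not $2$), so the exponent identity you propose to ``check'' is false, and there is no polynomial identity matching the Fibonomial expression to $N(n,k;q)|_{q=X/Y}$. What is true is an identity of \emph{values} on conjugacy classes: carrying the $q^{k(k+1)}$ factor through, the two sides differ by the prefactor $Y^{2k(n-k-1)}(X/Y)^{-k(k+1)}=Y^{2kn}(XY)^{-k(k+1)}$, and this equals $1$ on every class of $I_2(n)$ for $n$ odd, because $Y^{2n}=1$ (as $Y$ is $\xi_n^{-\ell}$ on rotations and $-1$ on reflections) and $(XY)^{k(k+1)}=(-t)^{k(k+1)}=1$ since $t=\pm 1$ and $k(k+1)$ is even. (The paper closes the same gap slightly differently: it observes via a $q$-Lucas-type argument that the surplus power of $Y$ evaluates to $1$ whenever $\qbinom{n}{k}_{q=X/Y}$ is non-zero, the discrepancy being irrelevant when it vanishes.) Without one of these root-of-unity evaluation arguments the character comparison does not close. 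A last small point, also elided in the paper: on the class $\{r^{\ell},r^{n-\ell}\}$ one has $X/Y=\xi_n^{2\ell}$ rather than $\xi_n^{\ell}$; this is harmless because $r^{2\ell}$ and $r^{\ell}$ generate the same subgroup when $n$ is odd and hence fix the same partitions.
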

\begin{proof}
First, we claim the character values of $\mathbb{C}[X]$ are $N(n,k;\xi_n^{\ell})$ for the conjugacy class $\{r^{\ell},r^{n-\ell}\}$ and $N(n,k;-1)$ for the conjugacy class $\{s,sr,sr^2,\ldots\}$, where $N(n,k)=|X|$ is the \emph{Narayana number} and 
\begin{equation}
N(n,k;q):=\frac{1}{[n]_q}\qbinom{n}{k}_q\qbinom{n}{k+1}_q q^{k(k+1)}
\end{equation}
the $q$-analogue of the Narayana number. In fact, both of these are already known. The character values of rotation conjugacy classes follow from Theorem 7.2 of \cite{rsw}. For the case of reflections, the character values are given by $N(n,k;-1)$ by Theorem 3.2.7 of \cite{ding}.

Now, consider the $(s,t)$-analogue of the Narayana number. This is a polynomial in $s,t$ for the same reasons $N(n,k;q)$ is a polynomial in $q$:
\begin{align*}
\frac{1}{\{n\}_{s,t}}\stbinom{n}{k}_{s,t}\stbinom{n}{k+1}_{s,t}&=\frac{1}{\{n-k\}_{s,t}}\stbinom{n}{k+1}_{s,t}\stbinom{n-1}{k}_{s,t}\\
&=\frac{1}{\{k+1\}_{s,t}}\stbinom{n}{k}_{s,t}\stbinom{n-1}{k}_{s,t}\\
&=\stbinom{n-1}{k}_{s,t}\stbinom{n+1}{k+1}_{s,t}-\stbinom{n}{k}_{s,t}\stbinom{n}{k+1}_{s,t}
\end{align*}
Next, the $(s,t)$-analogue of the Narayana number becomes, by Proposition \ref{prop:fibonomial-thm},
\[
	\frac{1}{\{n\}_{s,t}}\stbinom{n}{k}_{s,t}\stbinom{n}{k+1}_{s,t}=Y^{(1-n)+k(n-k)+(k+1)(n-k-1)}N(n,k;X/Y)
\]
where the exponent of $Y$ simplifies to $2k(n-k)-2k$ and if $Y=\xi_n^{-\ell}$, then $n|k\gcd(n,\ell)\implies n|\ell k$ so that the power of $Y$ goes to $1$ when the $q$-binomial $\qbinom{n}{k}_{q=X/Y}$ is non-zero. Thus, the claim follows.
\end{proof}

\section{\texorpdfstring{Dihedral Action on Triangulations of an $n$-gon}{Dihedral Action on Triangulations of an $n$-gon}}

The dihedral action on the triangulations of a regular $n$-gon involves Garsia and Haiman's $(q,t)$-Catalan number. The proof is by direct enumeration.

\begin{thm}
Let $n$ be odd and let $X$ be the set of triangulations of a regular $n$-gon. Then, the triple
\[
	\left(X,(-r)^{\binom{n-2}{2}}p_n(s,-r),I_2(n)\right)
\]
exhibits dihedral sieving, where $p_n(s,-r)=C_{n-2}(q,t)$ is the $q,t$-Catalan number of Garsia and Haiman written using the basis $s=q+t,-r=qt$, by virtue of it being a symmetric polynomial.
\end{thm}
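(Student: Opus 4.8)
The plan is to compute the character values of the permutation representation $\mathbb{C}[X]$ on both the rotation and reflection conjugacy classes of $I_2(n)$, and to match these against the $(q,t)$-Catalan polynomial specialized to the two variables $s = q+t$, $-r = qt$ (after extracting the prefactor $(-r)^{\binom{n-2}{2}}$). Since the theorem says the proof is by direct enumeration, I would not look for a $GL_n(\mathbb{C})$-representation-theoretic argument as in Section 4, but instead directly count fixed triangulations under each group element. First I would recall that triangulations of a regular $n$-gon are counted by the Catalan number $C_{n-2}$, and that the $q$-Catalan $C_{n-2}(q)$ already exhibits cyclic sieving for the rotation action $C_n \acts X$ by Example \ref{ex:sieving-examples}(5); this immediately handles the rotation conjugacy classes $\{r^\ell, r^{n-\ell}\}$, since $\chi_{\mathbb{C}[X]}(r^\ell)$ equals the number of triangulations fixed by $r^\ell$, which equals $C_{n-2}(q)|_{q = \xi_n^\ell}$.

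The new content is the reflection conjugacy class. For $n$ odd all reflections are conjugate, and a reflection $s$ fixes exactly one vertex of the $n$-gon and swaps the remaining $n-1$ vertices in pairs across the axis. I would count the triangulations invariant under such a reflection: the axis of symmetry must be an edge of every fixed triangulation's internal structure (it passes through the fixed vertex and the midpoint of the opposite edge), and a reflection-invariant triangulation is determined by a triangulation of one of the two congruent "halves." A careful bijective analysis should show the number of reflection-invariant triangulations of an $n$-gon ($n$ odd) is a central binomial-type coefficient — concretely $\binom{n-2}{(n-3)/2}$ or the equivalent $C_{n-2}(-1)$-style value — and I would verify this equals $C_{n-2}(q,t)$ specialized at $q = \xi_2 = -1$, i.e. at $(s,t) = (q+t, qt) = (-1+t, -t)$ evaluated along the curve corresponding to the reflection's eigenvalue data. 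Then I would invoke the fact (due to Garsia–Haiman, and symmetry $C_n(q,t) = C_n(t,q)$) that $C_{n-2}(q,t)$ is a symmetric polynomial in $q,t$, hence expressible as a polynomial $p_n(s,-r)$ in the elementary symmetric functions $s = q+t$ and $qt = -r$; and I would check that $(-r)^{\binom{n-2}{2}} p_n(s,-r)$ has nonnegative integer coefficients in $s$ and $r$ (or at least is an honest polynomial), so that it is a legitimate dihedral-sieving polynomial in the generators $z_1, -\det$ with the substitution $s = z_1$, $-r = -\det$.

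Finally I would assemble the two cases: using the dictionary from Proposition \ref{prop-generalization}, for a conjugacy class $C$ with $s = \chi_{z_1}(C)$ and $t = \chi_{-\det}(C)$, the eigenvalues of the corresponding group element are $X^{n-1}, X^{n-2}Y, \dots, Y^{n-1}$ with $X+Y = s$, $XY = -t$, and $X/Y$ is the relevant root of unity ($\xi_n^\ell$ for rotations, $-1$ for reflections). The prefactor powers of $Y$ coming from $(-r)^{\binom{n-2}{2}}$ and from the degree of $C_{n-2}(q,t)$ must be tracked so that, exactly as in the Narayana and Catalan propositions of Section 5, the power of $Y$ collapses to $1$ whenever the value is nonzero, leaving $C_{n-2}(q,t)$ evaluated at the appropriate $(q,t)$ with $q/t$ a root of unity. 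Matching this to the fixed-point counts on each conjugacy class then gives $\mathbb{C}[X] = (-r)^{\binom{n-2}{2}} p_n(z_1, -\det)$ in $\Rep(I_2(n))$, which is exactly dihedral sieving.

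I expect the main obstacle to be the reflection case: both identifying the exact closed form for the number of reflection-invariant triangulations of an odd $n$-gon via the halving bijection, and then confirming that this count coincides with the specialization of Garsia–Haiman's $(q,t)$-Catalan polynomial at the reflection's spectral parameter. The $(q,t)$-Catalan has no simple product formula, so the specialization identity at $q/t = -1$ (rather than at a generic point) will likely require either a known evaluation of $C_n(q,t)$ along $q = -t$ or a combinatorial statistic argument on Dyck paths / parking functions that mirrors the halving of triangulations.
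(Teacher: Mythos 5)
Your overall strategy is the same as the paper's: handle the rotation classes via the known cyclic sieving for triangulations, and handle the single reflection class by a direct fixed-point count compared against a specialization of $C_{n-2}(q,t)$. The rotation half of your argument is essentially complete (the paper specializes $t=q^{-1}$, uses the Garsia--Haiman identity $q^{\binom{m}{2}}C_m(q,q^{-1})=C_m(q)$, and notes that the prefactor $q^{\binom{n-2}{2}}$ equals $1$ precisely when the fixed-point count is nonzero, namely when $3\mid n$). But the reflection half, which you correctly identify as the crux, contains two genuine gaps.

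First, your guessed count of reflection-invariant triangulations is wrong. For $n$ odd the axis passes through one vertex $v$ and the midpoint of the opposite edge $e$; the unique triangle of the triangulation containing $e$ must be sent to itself, so its apex is the unique fixed vertex $v$, and that triangle separates the $n$-gon into two mirror halves with no diagonal crossing between them. Hence the count is the ordinary Catalan number $C_{(n-3)/2}$, not a central binomial coefficient $\binom{n-2}{(n-3)/2}$ or a ``$C_{n-2}(-1)$-style value.'' Second, and more seriously, the identity you defer to --- that $(-1)^{\binom{n-2}{2}}C_{n-2}(q,t)$ evaluated at the reflection's parameters equals this count --- is the actual content of the proof and cannot be waved at as ``a known evaluation.'' (Note also that the correct specialization is $q=-1$, $t=1$, so that $s=q+t=0$ and $qt=-1$ matches $\chi_{z_1}=0$ and $\det=-1$ on reflections; your formula $(s,t)=(-1+t,-t)$ is not a specialization to a point.) The paper closes this gap by passing to the Carlitz--Riordan $q$-Catalan via $C_{n-2}(-1,1)=\tilde{C}_{n-2}(-1)$ and then running an induction on the recurrence $\tilde{C}_{m+1}(-1)=\sum_{k}(-1)^k\tilde{C}_k(-1)\tilde{C}_{m-k}(-1)$: the even-index values vanish, and a parity bookkeeping of $\binom{i}{2}+\binom{2k-2-i}{2}+1\equiv\binom{2k-1}{2}\pmod 2$ shows the signed odd-index values satisfy the ordinary Catalan recurrence, giving $(-1)^{\binom{2k-1}{2}}\tilde{C}_{2k-1}(-1)=C_{k-1}=C_{(n-3)/2}$. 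Without this (or an equivalent evaluation of $C_m(q,t)$ at $(-1,1)$), your proof does not go through.
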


\begin{proof}
We verify this directly by computation. First, we consider the conjugacy class $C$ of rotations. Letting $q=\xi_n^{\ell}$ where $\xi_n$ is a primitive $n$-th root of unity for $\ell\in[n]$ and $t=q^{-1}$, we have $z_1(C)=s=q+q^{-1}$ and $r=qq^{-1}=1=\det(C)$ so that the claim follows from the identity
\[
	r^{\binom{n-2}{2}}p_n(s,r)=q^{\binom{n-2}{2}}C_{n-2}(q,q^{-1})=C_{n-2}(q)
\]
where $C_{n-2}(q)$ is MacMahon's $q$-Catalan number and where we use the fact that $q^{\binom{n-2}{2}}=1$ when $n$ is a multiple of $3$ which is the only instance where $C_{n-2}(q)$ is non-zero since a triangulated polygon with an odd number of sides is invariant under a rotation only if the triangle containing its incenter is.

For the conjugacy class $C$ of reflections, letting $q=-1$ and $t=1$, we have $z_1(C)=s=q+t=0$ and $r=-1=\det(C)$ so that
\[
	r^{\binom{n-2}{2}}p_n(s,r)=(-1)^{\binom{n-2}{2}}C_{n-2}(-1,1)
\]
Let $\tilde{C}_n(q)$ be the Carlitz-Riordan $q$-Catalan number. It is well known that $\tilde{C}_{n-2}(-1)=C_{n-2}(-1,1)$ and that
\[
	\tilde{C}_{n+1}(-1)=\sum_{k=0}^n (-1)^k\tilde{C}_k(-1)\tilde{C}_{n-k}(-1)
\]
with $\tilde{C}_0(-1)=1$. First, we claim $\tilde{C}_n(-1)=0$ for $n>0$ even. This is true by induction and evaluating the above formula. Thus, we have for $k\in\mathbb{N}$,
\[
	\tilde{C}_{2k-1}(-1)=-\sum_{i=1,2\nmid i}^{2k-2} \tilde{C}_i(-1)\tilde{C}_{2k-2-i}(-1)
\]
It suffices to show $(-1)^{\binom{2k-1}{2}}\tilde{C}_{2k-1}(-1)=C_{k-1}$, the ordinary $(k-1)$-th Catalan number. Noting the indices $i$ and $2k-2-i$ are both odd in each summand above, it is not hard to see that
\[
	\binom{i}{2}+\binom{2k-2-i}{2}+1\equiv \binom{2k-1}{2}\pmod{2}
\]
meaning, by induction, that the sum from before can be reparametrized as
\[
	(-1)^{\binom{2k-1}{2}}\tilde{C}_{2k-1}(-1)=\sum_{i=0}^{k-2} C_iC_{k-2-i}=C_{k-1}
\]
\end{proof}
Generalizing the ordinary Catalan numbers, the little Schr\"{o}der numbers $s_{n-2}$ count the number of ways to dissect a convex $n$-gon with any number of noncrossing diagonals. It has a decomposition into a sum of Kirkman numbers
\[
	\frac{1}{n+k}\binom{n+k}{k+1}\binom{n-3}{k}
\]
each of which counts the number of ways to dissect a convex $n$-gon using $k$ noncrossing diagonals. The $q$-analogue of the Kirkman number gives rise to a cyclic sieving phenomenon on the set of such dissections, as seen in Example~\ref{ex:sieving-examples} (6). The little Schr\"{o}der numbers and the Kirkman numbers have bivariate analogues, discussed in Chapter 4 of \cite{haglund}, which specialize to Garsia and Haiman's $(q,t)$-Catalan number.

Let $S_{n,d}(q,t)$ be the $q,t$-Schr\"{o}der number defined on pp. 60 of \cite{haglund}. Let $\tilde{S}_{n,d}(q,t)$ be the ``little'' $q,t$-Schr\"{o}der number defined recursively by
\begin{align*}
	\tilde{S}_{n,0}(q,t)&=S_{n,0}(q,t)\\
	S_{n,d}(q,t)&=\tilde{S}_{n,d}(q,t)+\tilde{S}_{n,d-1}(q,t)
\end{align*}
for $0\leq d\leq n$ according to pp. 61 of \cite{haglund}. By Corollary  4.8.1 of \cite{haglund}, we can write
\[
	S_{n,d}(q,1/q)=\frac{q^{\binom{d}{2}-\binom{n}{2}}}{[n-d+1]_q}\binom{2n-d}{n-d,n-d,d}_q
\]
where $S_{n,d}(q,1/q)$ is symmetric in $q$ and $1/q$ and, thus, so is $\tilde{S}_{n,d}(q,1/q)$. However, it is not known if $S_{n,d}(q,t)=S_{n,d}(t,q)$ in general. It is also true that $\tilde{S}_{n,0}(q,t)=C_n(q,t)$. This suggests a dihedral sieving phenomenon can arise using this polynomial. Using the above formula, we can evaluate $\tilde{S}_{n,d}(q,1/q)$ at specific points and compare them to the $q$-Kirkman numbers. Computational evidence seems to suggest the following is true.

\begin{conj}
The little $q,t$-Schr\"{o}der number is symmetric in $q$ and $t$. Furthermore, letting $n$ be odd and letting $X$ be the set of dissections of a regular $(n+2)$-gon using $k$ noncrossing diagonals, the triple
\[
	\left(X,(-r)^{f(n,k)}p_n(s,-r),I_2(n)\right)
\]
exhibits dihedral sieving, where $p_n(s,r)=\tilde{S}_{n,n-1-k}(q,t)$ is the little $q,t$-Schr\"{o}der number written using the basis $s=q+t,-r=qt$ and $f(n,k)$ is a function of $n$ and $k$.
\end{conj}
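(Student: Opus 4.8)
The conjecture has two parts, and the first---that $\tilde{S}_{n,d}(q,t)$ is symmetric in $q$ and $t$---is a prerequisite for the second even to be stated, so I would tackle it first. The natural route is to deduce it from the $(q,t)$-symmetry of the big $q,t$-Schr\"{o}der number $S_{n,d}(q,t)$: since the recursive definition gives $\tilde{S}_{n,d} = \sum_{j=0}^{d}(-1)^{d-j}S_{n,j}$, symmetry of all the $S_{n,j}$ forces symmetry of $\tilde{S}_{n,d}$. Symmetry of $S_{n,d}(q,t)$ should in turn follow by identifying Haglund's statistic-generating function with a scalar product against a Schur-positive symmetric function (of the shape $\langle \nabla e_{n}, e_{n-d}h_{d}\rangle$) and invoking the known $(q,t)$-symmetry of the Schur expansion of $\nabla e_{n}$; the actual work is verifying that the combinatorial definition matches this symmetric-function expression. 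I expect this to be the main obstacle: it is the Schr\"{o}der-path analogue of the Garsia--Haglund $q,t$-Catalan theorem, and as the excerpt notes, even $S_{n,d}$-symmetry is currently open.

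Granting symmetry, so that $\tilde{S}_{n,n-1-k}(q,t)$ can be written as $p_{n}(q+t,qt)$ for a genuine polynomial $p_{n}$, the sieving statement reduces---by the representation-ring criterion of the preliminaries, since $\Rep(I_{2}(n))$ injects into the ring of class functions---to checking the identity
\[
\chi_{\mathbb{C}[X]}(C) \;=\; (\det C)^{f(n,k)}\,\tilde{S}_{n,n-1-k}(q_{C},u_{C})
\]
on every conjugacy class $C$ of $I_{2}(n)$, where $(q_{C},u_{C})$ is the pair determined by $q_{C}+u_{C}=\chi_{z_{1}}(C)$ and $q_{C}u_{C}=\det(C)$. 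For $n$ odd these classes are the identity, the rotation classes $\{r^{\ell},r^{-\ell}\}$ for $1\le\ell\le(n-1)/2$, and the single reflection class. The identity case is just $\tilde{S}_{n,n-1-k}(1,1)=|X|$, the Kirkman number. A useful structural observation is that the case $k=n-1$ of the conjecture---all diagonals, i.e.\ triangulations, where $\tilde{S}_{n,0}=C_{n}$---is exactly the already-proved triangulation theorem applied to the $(n+2)$-gon, so the whole argument should be organized as an extension of that proof to general $k$, with $f(n,k)$ reducing to $\binom{n}{2}$ at $k=n-1$.

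For a rotation class $\{r^{\ell},r^{-\ell}\}$ with $\ell\neq 0$: the eigenvalues of $r^{\ell}$ under $z_{1}$ are $\xi_{n}^{\ell},\xi_{n}^{-\ell}$, so $q_{C}u_{C}=1$, $(q_{C},u_{C})=(\xi_{n}^{\ell},\xi_{n}^{-\ell})$, and $(\det C)^{f(n,k)}=1$, so the right-hand side is $\tilde{S}_{n,n-1-k}(q,1/q)$ at $q=\xi_{n}^{\ell}$. Using the recursion $\tilde{S}_{n,d}=\sum_{j}(-1)^{d-j}S_{n,j}$, Haglund's product formula for $S_{n,j}(q,1/q)$ (Corollary~4.8.1 of \cite{haglund}), and routine $q$-binomial manipulations, one should show $\tilde{S}_{n,n-1-k}(q,1/q)$ equals, up to an explicit power of $q$, the $q$-Kirkman number $\frac{1}{[n+2+k]_{q}}\qbinom{n+2+k}{k+1}_{q}\qbinom{n-1}{k}_{q}$, which at $q=\xi_{n}^{\ell}$ counts the $r^{\ell}$-fixed dissections by the cyclic sieving phenomenon of Example~\ref{ex:sieving-examples}(6) with $n$ replaced by $n+2$. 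A point to check is that this power of $q$ evaluates to $1$ at every $\xi_{n}^{\ell}$ for which the $q$-Kirkman number is nonzero---the analogue of the divisibility/vanishing check in the triangulation proof.

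For the reflection class: $\chi_{z_{1}}(C)=0$ and $\det C=-1$ force $q_{C}+u_{C}=0$, $q_{C}u_{C}=-1$, hence $\{q_{C},u_{C}\}=\{1,-1\}$, and (using the symmetry from part one) the right-hand side is $(-1)^{f(n,k)}\tilde{S}_{n,n-1-k}(1,-1)$. Here the direct enumeration happens in two pieces. Combinatorially, since $n+2$ is odd, a reflection $\sigma$ of the regular $(n+2)$-gon fixes one vertex and the midpoint of the opposite edge; a $\sigma$-invariant dissection is determined by its nested chain of $\sigma$-fixed (``perpendicular'') diagonals $\delta_{i_{1}}\subsetneq\cdots\subsetneq\delta_{i_{a}}$ together with a dissection of each of the $a+1$ half-regions these cut off, so the number of $\sigma$-fixed dissections with $k$ diagonals is a convolution of ordinary Kirkman numbers (with $k=a+2\cdot(\text{diagonals used inside the halves})$). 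On the algebraic side, $\tilde{S}_{n,n-1-k}(1,-1)$ should be computed from the $q,t$-Schr\"{o}der recursion of \cite{haglund} specialized at $q=1$, $t=-1$---the analogue of the Carlitz--Riordan computation in the triangulation proof, where one first shows the relevant values vanish outside a congruence class and then identifies the survivors with products of ordinary Catalan/Kirkman numbers. Matching the two computations, including the sign, verifies the identity and pins down $f(n,k)$ on this class (and hence everywhere, since the other classes impose no constraint on it). In order of difficulty, the main obstacles are the $(q,t)$-symmetry of part one and then this reflection enumeration, whose combinatorics of self-symmetric versus mirror-paired diagonals is genuinely richer than in the single triangulation case.
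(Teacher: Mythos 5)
You should first note that the paper does not prove this statement at all: it is left as a conjecture, supported only by the remark that ``computational evidence seems to suggest'' it, so there is no argument of the authors' to compare yours against; I can only judge your outline on its own terms. As an outline it is the natural plan (mirror the triangulation theorem: verify $\chi_{\mathbb{C}[X]}$ class by class, reducing rotations to the known cyclic sieving for dissections and reflections to a direct enumeration), but it is a program rather than a proof: the steps that constitute the actual content of the conjecture --- showing that the alternating sum $\tilde{S}_{n,d}(q,1/q)=\sum_{j\le d}(-1)^{d-j}S_{n,j}(q,1/q)$ collapses at the relevant roots of unity to a single $q$-Kirkman number times a power of $q$ that equals $1$ whenever that Kirkman value is nonzero, the reflection-class fixed-point count and its match with $\pm\tilde{S}_{n,d}(1,-1)$ (for which you would also need an explicit handle on that specialization, playing the role the Carlitz--Riordan recursion played for triangulations), and the determination of $f(n,k)$ --- are all deferred, and none of them is routine. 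One genuinely useful correction, though: part one is less of an obstacle than you (following the paper's remark) suggest. Haglund's $q,t$-Schr\"{o}der theorem, proved in the very book cited (the paper uses its Corollary 4.8.1), identifies $S_{n,d}(q,t)$ with $\langle \nabla e_{n}, e_{n-d}h_{d}\rangle$, and $q,t$-symmetry of that scalar product follows from the standard $\mu\mapsto\mu'$ symmetry of $\nabla e_{n}$; your alternating-sum identity then transfers symmetry to $\tilde{S}_{n,d}$. So the paper's sentence should be read as ``no combinatorial proof of symmetry is known,'' and the genuine difficulty is the sieving verification, not the symmetry.

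The one concrete gap in your sketch (inherited from the conjecture's statement, but you must resolve it before anything can be checked) is the group/polygon mismatch. As written, $X$ consists of dissections of a regular $(n+2)$-gon while the group is $I_{2}(n)$, which has no natural action on $X$. Your rotation-class computation uses conjugacy classes of $I_{2}(n)$, so the evaluation point is $q=\xi_{n}^{\ell}$, but you then invoke the cyclic sieving of Example \ref{ex:sieving-examples}(6) for the $(n+2)$-gon, which counts fixed dissections via evaluations at $(n+2)$-nd roots of unity; an evaluation at $\xi_{n}^{\ell}$ counts fixed points of no rotation of the $(n+2)$-gon, so that step does not parse. Likewise your reduction of the case $k=n-1$ to ``the triangulation theorem applied to the $(n+2)$-gon'' implicitly replaces the group by $I_{2}(n+2)$. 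The intended statement is almost certainly the natural action of $I_{2}(n+2)$ on the $(n+2)$-gon (equivalently, dissections of an $n$-gon with polynomial built from $\tilde{S}_{n-2,\,n-3-k}$), and all of your class-by-class computations should be redone with $\xi_{n+2}$ in place of $\xi_{n}$; once that is fixed, your reflection analysis (axis through a vertex and the opposite edge midpoint, symmetric diagonals pairwise parallel and nested, all other diagonals off the axis in mirror pairs) is the right combinatorial picture, but it still has to be carried out and matched, sign and all, against the $\{q,t\}=\{1,-1\}$ specialization before the conjecture can be considered proved.
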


\section{\texorpdfstring{Further Questions}{Further Questions}}
\subsection{\texorpdfstring{The case of even $n$}{The case of even n}}
Each of our proofs of instances of $I_2(n)$-sieving specifically relied on $n$ being odd. In the case of $n$ even, empirical evidence seems to suggest any possible instances of dihedral sieving for $k$-subsets, $k$-multisubsets, or noncrossing partitions will not be given by taking an obvious $(s,t)$-analogue polynomial when specializing to generators of the representation ring of $I_2(n)$. This is especially true since the representation ring is different for $n$ odd and even. Nonetheless, the character values for each of these group actions can be shown, in a similar manner to the odd case, to be $\mathbb{Z}$-linear combinations of $q$-analogues ($q$-binomials for the $k$-subsets and $k$-multisubsets actions and $q$-Catalan numbers for the noncrossing partitions action) evaluated at certain values, as shown in Table \ref{table-character}.

We expect that exhibiting dihedral sieving for even $n$ is possible but more difficult, especially for $k$-subsets and $k$-multisubsets using product formulas, and offer some evidence as to why. Consider the identity
\[
\operatorname{Sym}^{k}(X \oplus Y) \cong \bigoplus _{i = 0} ^{k} \operatorname{Sym}^{i}(X) \otimes \operatorname{Sym}^{k - i}(Y).
\]
Noting that $\Rep(SO(2)) \cong \mathbb{Z}[q, q^{-1}]$ and $\operatorname{Sym}^{k}(q^{m}) = q^{km}$, an inductive argument shows that $\operatorname{Sym}^{k}(1 + q + \cdots + q^{n - 1}) = \qbinom{n + k - 1}{k}_{q}$ in $\Rep(SO(2))$. The identity suggests that we should consider $\Rep(O(2))$ to study dihedral sieving, but this approach fails since the representation ring of $O(2)$ behaves more similarly to the case when $n$ is odd. In particular, all reflections in $O(2)$ are conjugate and the only one-dimensional irreducible representations of $O(2)$ are the trivial and determinant representations. Moreover, Fibonomial coefficients do not seem to describe symmetric powers of $O(2)$ representations in the same way as $q$-analogues do for $SO(2)$ representations, although there may be some alternative generalization of $q$-binomial coefficients which do.

Interestingly, the same discrepancy between even and odd $n$ occurs when we consider real representation rings of cyclic groups, i.e.\ the proper subring of $\Rep(C_{n})$ generated by all representations of the form $\mathbb{C} \otimes V$ where $V$ is a representation of $C_{n}$ over $\mathbb{R}$. The real irreducible representations of $C_{n}$ are all restrictions of irreducible representations of $I_{2}(n)$, and in particular there is an additional one-dimensional irreducible real representation when $n$ is even, while for $n$ odd and $SO(2)$ the only one-dimensional irreducible real representation is the trivial one. Thus it may useful to exhibit cases of cyclic sieving using generators of the real representation ring (regarded as a proper subring of the complex representation ring) before attempting dihedral sieving for even $n$. Note that since the irreducible real representations of $C_{n}$ are restrictions of $I_{2}(n)$ irreducibles, dihedral sieving for even $n$ would directly exhibit cases of cyclic sieving using generators of the real representation ring.

\section{Appendix}

\FloatBarrier
\begin{table}[H]\label{table-character}
\begin{tabular}{|c|c|c|c|}
\hline
$n$ odd & $e$ & $r^{\ell},r^{n-\ell}$ & $sr,sr^2,sr^3,\ldots$\\
 \hline
$\mathbbm{1}$ & $1$ & $1$ & $1$ \\
\hline
$\det$ & $1$ & $1$ & $-1$\\
\hline
$\chi_m$ & $2$ & $2\cos\left(\frac{2\pi m\ell}{n}\right)$ & $0$\\
\hline
$\chi_{\text{$k$-subsets}}$ & $\binom{n}{k}$ & $\qbinom{n}{k}_{q=\xi_n^{\ell}}$ & $\qbinom{n}{k}_{q=\xi_2}$\\
\hline
$\chi_{\NC(n)}$ & $C_n$ & $C_n(q)|_{q=\xi_n^{\ell}}$ & $C_n(q)|_{q=\xi_2}$\\
\hline
$\chi_{\text{triangulations}}$ & $C_{n-2}$ & $C_{n-2}(q)|_{q=\xi_n^{\ell}}$ & $\frac{2}{n-1}C_{n-3}(q)|_{q=\xi_2}$\\
\hline
$\chi_{NC(n,k)}$ & $N(n,k)$ & $N(n,k;\xi_n^{\ell})$ & $N(n,k;\xi_2)$\\
\hline
$\chi_{\text{$k$-multisubsets}}$ & $\binom{n+k-1}{k}$ & $\qbinom{n+k-1}{k}_{q=\xi_n^{\ell}}$ & $\qbinom{n+k-1}{k}_{q=\xi_2}$\\
\hline
\end{tabular}

\vspace{1em}

\begin{tabular}{|c|c|c|c|c|}
\hline
$n$ even & $e$ & $r^{\ell},r^{n-\ell}$ & $sr,sr^3,sr^5,\ldots$ & $s,sr^2,sr^4,\ldots$\\
 \hline
$\mathbbm{1}$ & $1$ & $1$ & $1$ & $1$ \\
\hline
$\det$ & $1$ & $1$ & $-1$ & $-1$\\
\hline
$\chi_b:=\begin{cases} \langle r^2,s\rangle\mapsto 1\\ r\mapsto -1\end{cases}$ & $1$ & $(-1)^{\ell}$ & $-1$ & $1$\\
\hline
$\chi_b\cdot \det$ & $1$ & $(-1)^{\ell}$ & $1$ & $-1$\\
\hline
$\chi_m$ & $2$ & $2\cos\left(\frac{2\pi m\ell}{n}\right)$ & $0$ & $0$\\
\hline
$\chi_{\text{$k$-subsets}}$ & $\binom{n}{k}$ & $\qbinom{n}{k}_{q=\xi_n^{\ell}}$ & $\qbinom{n}{k}_{q=\xi_2}$ & $\qbinom{n}{k}_{q=\xi_2}+2\qbinom{n-2}{k-1}_{q=\xi_2}+\qbinom{n-2}{k-2}_{q=\xi_2}$\\
\hline
$\chi_{\NC(n)}$ & $C_n$ & $C_n(q)|_{q=\xi_n^{\ell}}$ & $C_n(q)|_{q=\xi_2}$ & $C_n(q)|_{q=\xi_2}$\\
\hline
$\chi_{\text{triangulations}}$ & $C_{n-2}$ & $C_{n-2}(q)|_{q=\xi_n^{\ell}}$ & $0$ & $\frac{4}{n}C_{n-2}(q)|_{q=\xi_2}$\\
\hline
$\chi_{\NC(n,k)}$ & $N(n,k)$ & $N(n,k,\xi_n^{\ell})$ & $N(n,k,\xi_2)$ & $N(n,k,\xi_2)$\\
\hline
$\chi_{\text{$k$-multisubsets}}$ & $\binom{n+k-1}{k}$ & $\qbinom{n+k-1}{k}_{q=\xi_n^{\ell}}$ & $\qbinom{n+k-1}{k}_{q=\xi_2}$ & $\qbinom{n+k-2}{k}_{q=\xi_2}+2\qbinom{n+k-3}{k-1}_{q=\xi_2}+\qbinom{n-3+k}{k-2}_{q=\xi_2}$\\
\hline
\end{tabular}
\caption{Character values for various $I_{2}(n)$ actions.}
\end{table}

\section*{Acknowledgments} 
This research was carried out as part of the 2017 summer REU program at the School of Mathematics, University of Minnesota, Twin Cities, and was supported by NSF RTG grant DMS-1148634. The authors would like to thank Victor Reiner, Pavlo Pylyavskyy, and Benjamin Strasser for their mentorship and support.

\end{document}